\newtheorem{theorem}{Theorem}
\newtheorem{lemma}{Lemma}
\newtheorem{corollary}{Corollary}
\newtheorem{conjecture}{Conjecture}
\title{Algorithms for complementary sequences}
\author{Chai Wah Wu}
\affil{Mathematics of Computation\\IBM Research\\IBM T. J. Watson Research Center, Yorktown Heights, NY 10598, USA\thanks{cwwu@us.ibm.com}}
\date{September 9, 2024\\Latest update: September 12, 2025}
\begin{document}

\maketitle

\centerline{\bf Abstract}
\noindent
Finding the $n$-th positive square number is easy, as it is simply $n^2$. But how do we find the complementary sequence, i.e., the $n$-th positive non-square number? For this case there is an explicit formula. However, for general constraints on numbers, a formula is harder to find. In this paper, we study how to compute the $n$-th integer that does (or does not) satisfy a certain condition. In particular, we consider it as a fixed point problem, relate it to the iterative method of Lambek and Moser, study a bisection approach to this problem, and provide novel formulas for various complementary sequences including the non-$k$-gonal numbers, non-$k$-gonal-pyramidal numbers, non-$k$-simplex numbers, non-sum-of-$k$-th-powers, and non-$k$-th-powers. 
For example, we show that the $n$-th non $k$-gonal number is given by $n+\text{round}\left(\sqrt{\frac{2n-2+\left\lfloor\frac{k+1}{4}\right\rfloor}{k-2}}\right)$ and that the $n$-th non-second-hexagonal number is $n+\left\lceil\sqrt{\frac{n}{2}}\right\rceil-1$.

\section{Introduction} \label{sec:intro}
For a positive integer $n\in \mathbb{N}^+$, the $n$-th positive square number is simply $n^2$. Can we also easily find the complementary sequence? In other words, what is the $n$-th positive non-square number? It is quite remarkable that there exists an explicit formula for the $n$-th positive non-square number:
$n+\lfloor \frac{1}{2}+\sqrt{n}\rfloor = n+\lfloor \sqrt{n+\lfloor \sqrt{n}\rfloor}\rfloor$ \cite{Lambek1954,Honsberger1970,Nelson1988,mortici2010}.
This can also be computed as
$n+\lfloor\sqrt{n}\rfloor +1$ if $n-1\geq \lfloor\sqrt{n}\rfloor(\lfloor\sqrt{n}\rfloor+1)$ and as 
$n+\lfloor\sqrt{n}\rfloor$ otherwise. These formulas are well-suited for implementation in a computer algorithm since many computer languages and number theory software packages include functions to compute
$\lfloor\sqrt{n}\rfloor$.
For instance, the \texttt{isqrt} function in Python, Julia, and Maple all perform this calculation.
These formulas have been extended to higher powers as well. In particular, the $n$-th non-$k$-th-power number is given by $n+\lfloor \sqrt[k]{n+\lfloor \sqrt[k]{n}\rfloor}\rfloor$ \cite{Lambek1954,Reis1990,Nyblom2002}.\footnote{The computation of $\lfloor \sqrt[k]{n}\rfloor$ for arbitrary integers $k$ and $n\geq 0$ is readily available in symbolic computer algebra systems and software for number theory. For instance, $\lfloor \sqrt[k]{n}\rfloor$ can be computed with the \texttt{integer\char`_nthroot} function in the \texttt{sympy} Python module which in turn uses the \texttt{mpz\char`_root} function in the multiple precision library \texttt{gmp}. Although these computer operations assume that $n$ is an integer, they can be used to compute $\lfloor \sqrt[k]{n}\rfloor$ for all real $n\geq 0$ since $\lfloor \sqrt[k]{n}\rfloor = \lfloor \sqrt[k]{\lfloor n\rfloor}\rfloor$ for $n\geq 0$ (see \cite[Equation 3.9]{graham:concrete_math:1994}).}

For $P$ a logical statement on the natural numbers, let us define $f_P(n)$ as the $n$-th positive natural number $m$ such that $P(m)$ is true. For the case where $P(m)$ denotes the logical statement ``$m$ is square'', $f_P(n)$ is easily determined, since the list of integers $m$ such that $P(m)$ is true are easily enumerated. As noted in the example above, for this particular $P$, the function $f_{\neg P}(n)$ can also be computed by an explicit formula.
However, in general, the simplicity of $f_P$ does not imply the simplicity of $f_{\neg P}$. Furthermore, for more general statements $P$, the formula for $f_P(n)$ or $f_{\neg P}(n)$ may not be readily available. Even if such explicit formulas are available, some of them require the use of floating point arithmetic and it can be difficult to use computationally to find $f_P(n)$ or $f_{\neg P}(n)$, especially for large $n$. See for example the formulas for the $n$-th non-Fibonacci number in \cite{gould1965,Farhi2011} which require $\log_\phi$ at high precision for large $n$. While there have been many studies of explicit formulas for such complementary sequences \cite{Lambek1954,gould1965,Honsberger1970,Nelson1988,Reis1990,Nyblom2002,mortici2010,Farhi2011}, there have not been much study
in computer algorithms to calculate such sequences. The purpose of this paper is to discuss algorithms to compute $f_P(n)$ or $f_{\neg P}(n)$.

\section{Finding $f_P(n)$ as the Solution to a Fixed Point Problem}
For an integer $a$, define the {\em counting function} $C_P(a) = |\{b\in\mathbb{N}:(1\leq b\leq a) \wedge P(b)\}|$ as the number of positive integers less than or equal to $a$ such that $P(a)$ is true. 
It is clear that $C_P(a)$ is increasing, $0\leq C_P(a)\leq a$, and $0\leq C_{\neg P}(a)=a-C_{P}(a)\leq a$. Furthermore, $f_P(n)$ is the smallest integer $m$ such that $C_P(m) = n$. Also note that $f_P(n)\geq n$ and $f_P$  is strictly increasing. 

Define $g_n(x) = n+C_{\neg P}(x) = n+x-C_P(x) $. A fixed point $x$ of $g_n$ satisfies $x=n+x-C_P(x)$, i.e., $C_P(x) = n$. Thus, the smallest fixed point of $g_n$ is equal to $f_P(n)$. Furthermore, a fixed point of $g_n$ that is in the range of $f_P$ is equal to $f_P(n)$. In particular, if $g_n$ has a unique fixed point, then it must necessarily be equal to $f_P(n)$. Finding a fixed point of $f(x)$ is equivalent to finding a root of $f(x)-x$. Equivalently, we could define $\tilde{g}_n(x) = n-C_P(x)$ and find the roots of $\tilde{g}_n$. However, in the sequel we will consider the fixed point formulation as  $g_n$ is defined with $C_{\neg P}$  and has a more natural interpretation for complementary sequences. Furthermore, we show below that the function iteration method solving this fixed point problem is equivalent to the well-known Lambek-Moser method for defining complementary sequences.

The function $g_n$, viewed as a function on the real numbers, is a piecewise-linear function. For each value of $n$, the function $g_n$ lines up with the identity function on the segment  $\{m\in\mathbb{N}: C_P(m) = n\}$. It is clear that $g_n(m) > m$ if $m<f_P(n)$ and $g_n(m)\leq m$ for $m\geq f_P(n)$. As an example, we show in Figure \ref{fig:cp} the function $g_n$ for the case where $P(m)$ denotes the statement ``$m$ is prime'' and $n$ is equal to $4$. Notice that the minimal fixed point is at $x=7$ which corresponds to $f_P(4)$, i.e., the fourth prime number.
Next, let us consider methods to find the smallest fixed point of such an increasing piecewise-linear function $g_n$ on the integers.

\begin{figure}[htbp]
\begin{center}
\includegraphics[width=6in]{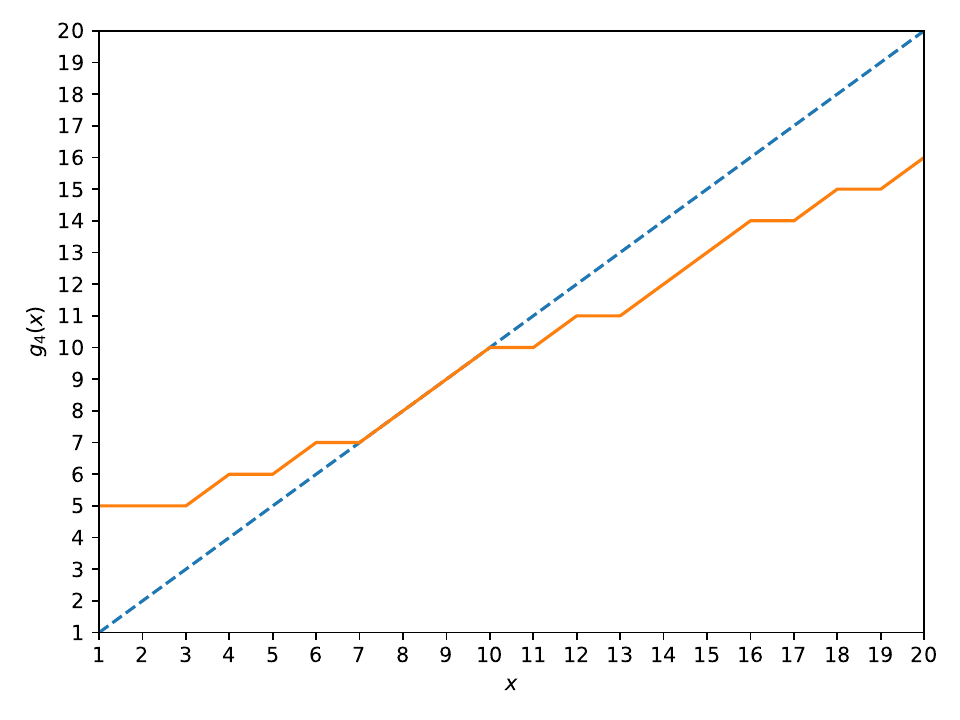}
\end{center}
\caption{$g_n(x)$ when $n=4$ and $P(m)$ denote the logical statement ``$m$ is prime''. The minimal fixed point is at $x=7$ which corresponds to $f_P(4)$, the fourth prime number.}
\label{fig:cp}
\end{figure}

\subsection{Function Iteration Method}
\label{sec:iteration}
The function iteration method to find a fixed point of a function is a classical method that dates back to at least Heron's method for finding an approximation to the square root \cite{Heath1921} and is used in general root finding algorithms.  
We first pick an initial condition less than or equal to $f_P(n)$.
Since $n\leq f_P(n)$, we can start with the initial condition $x = n$ and apply the iteration $x\rightarrow g_n(x)$ repeatedly until convergence (Algorithm \ref{alg:iterationCP}). This is for example implemented in the \texttt{FixedPoint} function in Mathematica.
Note that since $g_n(x) \geq x$ initially, at each step of the algorithm the value of $x$ increases, until it reaches a point where $g_n$ intersects with the identity function, which is the smallest fixed point, i.e., $f_P(n)$.
For the initial condition $x=n$, it is easy to see that this method is equivalent to the Lambek-Moser method and \cite{Lambek1954} showed that it indeed converges to the smallest fixed point.

\begin{algorithm}[htbp]
\begin{algorithmic}
\Require $g_n(x)$
\Comment computes the minimal fixed point of $g_n$.
\State $m \gets n$
\While {$g_n(m) \neq m$}
\State $m \gets g_n(m)$ 
\EndWhile
\State \Return $m$
\end{algorithmic}
\caption{Function iteration method on $g_n(x)$ to compute $f_P(n)$.}
\label{alg:iterationCP}
\end{algorithm}

While the Lambek-Moser method assumes the initial condition $x=n$, depending on $P$ we may choose a more suitable initial condition. 
For instance, if $P(m)$ denotes the statement ``$m$ is the product of $k$ distinct primes'', then the initial condition can be chosen as $\max(n,p_k\#)$ since $f_P(n)\geq p_k\#$ where $p_k\#$  is the $k$-th primorial. 

The number of steps needed for convergence is less than $f_{P}(n)-n$ and thus this algorithm is efficient when $f_{P}(n)-n$ is small with respect to $n$, i.e., when the numbers satisfying $P$ are dense.
In particular, in \cite{Lambek1954} it is shown that if the difference function of $f_{\neg P}$ has at least a linear growth rate (implying that $f_{\neg P}$ grows at least quadratically) then $2$ steps suffice.  More precisely,  it is shown that 
\begin{theorem}
\label{thm:twostep}
If $f_{\neg P}(m+1)-f_{\neg P}(m)\geq m$ for all $m$, then 
$f_P(n) = g_n(g_n(n)) = n+C_{\neg P}(n+C_{\neg P}(n))$.
\end{theorem}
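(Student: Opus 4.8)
The plan is to run the function iteration of Algorithm~\ref{alg:iterationCP} for exactly two steps from $x_0=n$ and prove that the resulting iterate is already a fixed point of $g_n$. The excerpt has established that this iteration is monotone nondecreasing and converges to the smallest fixed point, which is $f_P(n)$; hence once I exhibit a fixed point at the second step, the sequence is constant thereafter and its limit $f_P(n)$ must equal that second iterate. Concretely, write $a=C_{\neg P}(n)$, so that $x_1=g_n(n)=n+a$, and set $b=C_{\neg P}(n+a)$, so that $x_2=g_n(x_1)=n+b$. Since $C_{\neg P}$ is nondecreasing and $n+a\ge n$, we have $b\ge a$, and $x_2$ being a fixed point is equivalent to the single identity $C_{\neg P}(n+b)=b$.

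First I would reduce this identity to a one-sided bound. Because $C_{\neg P}$ is nondecreasing and $n+b\ge n+a$, we automatically get $C_{\neg P}(n+b)\ge C_{\neg P}(n+a)=b$, so it remains only to show $C_{\neg P}(n+b)\le b$, i.e. that $f_{\neg P}(b+1)>n+b$, meaning no non-$P$ number lies in the window $(n+a,\,n+b]$.

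The crux is to control how many non-$P$ numbers fall in the window $(n,\,n+a]$, which has length $a$. These are precisely $f_{\neg P}(a+1),\dots,f_{\neg P}(b)$, and I claim the hypothesis forces at most one of them, that is, $b\le a+1$. Indeed, if $b\ge a+2$ then both $f_{\neg P}(a+1)$ and $f_{\neg P}(a+2)$ lie in $(n,\,n+a]$, forcing $f_{\neg P}(a+2)-f_{\neg P}(a+1)\le a-1$; but the hypothesis with $m=a+1$ gives $f_{\neg P}(a+2)-f_{\neg P}(a+1)\ge a+1$, a contradiction. This sparsity estimate is the heart of the argument and the step I expect to require the most care, since it is where the quadratic growth encoded by the gap condition is converted into the statement that the iteration window can absorb at most one new non-$P$ term.

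Finally I would dispatch the two surviving cases. From $C_{\neg P}(n)=a$ we have $f_{\neg P}(a+1)>n$, i.e. $f_{\neg P}(a+1)\ge n+1$. If $b=a$, then $C_{\neg P}(n+a)=a$ already gives $f_{\neg P}(a+1)>n+a=n+b$, so $C_{\neg P}(n+b)\le b$. If $b=a+1$, then applying the hypothesis once more yields $f_{\neg P}(a+2)\ge f_{\neg P}(a+1)+(a+1)\ge (n+1)+(a+1)=n+a+2>n+b$, again giving $C_{\neg P}(n+b)\le b$. In either case $C_{\neg P}(n+b)=b$, so $x_2$ is a fixed point of $g_n$, and by the convergence of the iteration to the smallest fixed point we conclude $x_2=n+C_{\neg P}(n+C_{\neg P}(n))=f_P(n)$.
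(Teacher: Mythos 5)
Your proof is correct. Note, however, that the paper offers no proof of this theorem at all --- it is quoted as a result of Lambek and Moser and deferred to \cite{Lambek1954} --- so strictly there is nothing in-text to compare against; your write-up effectively supplies the missing argument. It is worth observing that your central sparsity estimate, $b\le a+1$ where $a=C_{\neg P}(n)$ and $b=C_{\neg P}(n+a)$, is exactly the mechanism the paper does use one theorem later: in the proof of Theorem~\ref{thm:onestep} the author shows $f_{\neg P}(C_{\neg P}(n)+2)>g_n(n)$ via the gap hypothesis with $m=C_{\neg P}(n)+1$, concluding $g_n(g_n(n))<n+C_{\neg P}(n)+2$ and $g_n(g_n(n))\ge n+C_{\neg P}(n)$ --- the same two-sided pinch you establish, organized slightly differently (the paper bounds the second iterate directly, while you show the second iterate is a fixed point and invoke convergence of the iteration to the smallest fixed point). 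Your case split $b\in\{a,a+1\}$, using $f_{\neg P}(a+1)\ge n+1$ and the hypothesis at $m=a+1$ to push $f_{\neg P}(b+1)$ past $n+b$, is sound, and the final appeal to the monotone iteration converging to $f_P(n)$ is legitimate since the paper states (citing Lambek--Moser) that Algorithm~\ref{alg:iterationCP} converges to the smallest fixed point; alternatively you could close the loop without that citation by noting $g_n$ is nondecreasing and $g_n(f_P(n))=f_P(n)$, so by induction $x_k\le f_P(n)$ for all $k$, and a fixed point that is at most the smallest fixed point must equal it. The net effect is a self-contained proof of the two-step theorem that makes the paper's subsequent one-step refinement (Theorem~\ref{thm:onestep}) read as a sharpening of an argument already on the page rather than of an imported black box.
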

Sequences satisfying these conditions include non-$k$-th-powers or the non-powers of $k$.
Thus, in these cases the computation of $f_P(n)$ requires at most $2$ evaluations of the counting function $C_{\neg P}$. We next show that $1$ evaluation suffices.

\begin{theorem} \label{thm:onestep}
If $f_{\neg P}(m+1)-f_{\neg P}(m)\geq m$ for all $m$, then $f_{P}(n) = n+C_{\neg P}(n)+1$ if $n+C_{\neg P}(n)\geq f_{\neg P}(C_{\neg P}(n)+1)$ and $f_{P}(n) = n+C_{\neg P}(n)$ otherwise.
\end{theorem}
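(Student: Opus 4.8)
The plan is to build directly on Theorem~\ref{thm:twostep}, which under the same hypothesis already gives $f_P(n) = n + C_{\neg P}(n + C_{\neg P}(n))$. Writing $k = C_{\neg P}(n)$, it remains only to evaluate $C_{\neg P}(n+k)$ and to show that it equals either $k$ or $k+1$. Since $C_{\neg P}$ counts non-$P$ integers, we have $C_{\neg P}(n+k) = k + N$, where $N$ is the number of non-$P$ integers lying in the interval $(n, n+k]$. The whole theorem then reduces to proving that $N \in \{0,1\}$ and pinning down exactly when $N = 1$.

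First I would locate the relevant non-$P$ integers. Because $C_{\neg P}(n) = k$, there are exactly $k$ non-$P$ integers in $[1,n]$, so $f_{\neg P}(k) \leq n$ while $f_{\neg P}(k+1) \geq n+1$. Hence the smallest non-$P$ integer that could possibly lie in $(n, n+k]$ is $f_{\neg P}(k+1)$, and it does lie there precisely when $f_{\neg P}(k+1) \leq n+k$, i.e. when $n + k \geq f_{\neg P}(C_{\neg P}(n)+1)$ --- exactly the dichotomy in the statement.

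The heart of the argument is to rule out a second non-$P$ integer in the interval. Applying the growth hypothesis $f_{\neg P}(m+1) - f_{\neg P}(m) \geq m$ with $m = k+1$ gives $f_{\neg P}(k+2) \geq f_{\neg P}(k+1) + (k+1) \geq (n+1) + (k+1) = n+k+2 > n+k$, so $f_{\neg P}(k+2)$ already overshoots the right endpoint. Consequently at most one non-$P$ integer, namely $f_{\neg P}(k+1)$, can fall in $(n,n+k]$, which yields $N = 1$ when $n+k \geq f_{\neg P}(k+1)$ and $N = 0$ otherwise. Substituting back into $f_P(n) = n + C_{\neg P}(n+k) = n + k + N$ gives the two cases.

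I expect the main obstacle to be bookkeeping at the boundary rather than anything conceptual: one must justify $f_{\neg P}(k+1) \geq n+1$ carefully from the definition of the counting function, and handle the degenerate possibility that fewer than $k+1$ (or $k+2$) non-$P$ integers exist at all, in which case the relevant $f_{\neg P}$ values are absent and the argument collapses into the $N=0$ branch. Once these edge cases are dispatched, the gap estimate does all the work.
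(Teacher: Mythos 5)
Your proof is correct and takes essentially the same approach as the paper: both reduce via Theorem~\ref{thm:twostep} to evaluating $C_{\neg P}(n+C_{\neg P}(n))$, use $f_{\neg P}(C_{\neg P}(n)+1)>n$ together with the growth hypothesis at $m=C_{\neg P}(n)+1$ to show this count is $C_{\neg P}(n)$ or $C_{\neg P}(n)+1$, and identify the same threshold condition $n+C_{\neg P}(n)\geq f_{\neg P}(C_{\neg P}(n)+1)$. Your formulation via the number $N$ of non-$P$ integers in $(n,n+k]$ is just a restatement of the paper's sandwich $n+C_{\neg P}(n)\leq g_n(g_n(n))<n+C_{\neg P}(n)+2$.
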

\begin{proof}
First note that $f_{\neg P}(C_{\neg P}(n)+1)> n$. By hypothesis, $f_{\neg P}(C_{\neg P}(n)+2) \geq C_{\neg P}(n)+1+f_{\neg P}(C_{\neg P}(n)+1) > C_{\neg P}(n)+n = g_n(n)$.
This means that $C_{\neg P}(n)+2 > C_{\neg P}(C_{\neg P}(n)+n) = g_n(g_n(n))-n$, i.e., $g_n(g_n(n)) < n+C_{\neg P}(n)+2$.
Since $C_{\neg P}$ is increasing, $g_n(g_n(n))\geq n+C_{\neg P}(n)$.
Finally, it is easy to see that the threshold where $g_n(g_n(n))$ changes from $n+C_{\neg P}(n)$ to $n+C_{\neg P}(n)+1$ is precisely given by $n+C_{\neg P}(n)\geq f_{\neg P}(C_{\neg P}(n)+1)$.
\end{proof}

This can be more compactly expressed using the Iverson bracket \cite{iverson1962,knuth:notation:1992}, which we denote using $\llbracket\hspace{0.1cm}\rrbracket$:
$$f_{P}(n) = n+C_{\neg P}(n)+\llbracket n+C_{\neg P}(n)\geq f_{\neg P}(C_{\neg P}(n)+1)\rrbracket.$$
Next we show several applications of Theorem \ref{thm:onestep}.

\subsubsection{Non-$k$-th-powers}
As an example of applying Theorem \ref{thm:onestep}, we give the following formula for the $n$-th non-$k$-th power for $k>1$, which simplifies the formula given in Section \ref{sec:intro} by requiring only one evaluation of the integer $k$-th root function $\lfloor\sqrt[k]{n}\rfloor$:
\begin{equation*} a(n) = \left\{ \begin{array}{ll} n+\lfloor\sqrt[k]{n}\rfloor+1 & \mbox{if } \quad n+\lfloor\sqrt[k]{n}\rfloor\geq \left(\lfloor\sqrt[k]{n}\rfloor +1\right)^k\\
 n+\lfloor\sqrt[k]{n}\rfloor & \mbox{otherwise.} 
 \end{array} 
 \right.
 \label{eqn:oneeval}
 \end{equation*}

\subsubsection{Non-Mersenne Numbers} 
Similarly, for the non-Mersenne numbers (i.e., numbers not of the form $2^p-1$ for $p$ prime), we have for $n>1$ the formula
  $$ a(n) = \left\{ \begin{array}{ll} n+s+1 & \mbox{if } \quad n+1+s\geq 2^{p_{s +1}}\\
 n+s & \mbox{otherwise,} 
 \end{array} 
 \right.$$
 where $s = \lfloor\log_2{\pi(n)}\rfloor$, $p_k$ is the $k$-th prime, and $\pi(n)$ is the prime counting function that returns the number of prime numbers less than or equal to $n$.

\subsubsection{Non-Fermat Numbers} 
The {\em Fermat numbers} are defined as $2^{2^{n-1}}+1$ for $n\geq 1$, i.e., $3, 5, 17, 257, 65537,\ldots $.
The first two non-Fermat numbers are $1$ and $2$ and the $n$-th non-Fermat numbers for $n>2$
are 
$$ a(n) = \left\{ \begin{array}{ll} n+\lfloor\log_2(\log_2(n-1))\rfloor+2 & \mbox{if } n+\lfloor\log_2(\log_2(n-1))\rfloor\geq 4^{2^{\lfloor\log_2(\log_2(n-1))\rfloor}}\\
  n+\lfloor\log_2(\log_2(n-1))\rfloor+1 & \mbox{otherwise.} 
 \end{array} 
 \right.$$

\subsubsection{Non-powers of $k$}
Similarly, for the non-powers of $k$ we have the formula
$$ a(n) = \left\{ \begin{array}{ll} n+1+\lfloor\log_k{n}\rfloor+1 & \mbox{if } \quad n+1+\lfloor\log_k{n}\rfloor\geq k^{\lfloor\log_k{n}\rfloor +1}\\
 n+1+\lfloor\log_k{n}\rfloor & \mbox{otherwise.} 
 \end{array} 
 \right.$$
 
\subsubsection{Non-Jacobsthal Numbers}
The {\em Jacobsthal numbers} $0,1,1,3,5,11,21,\ldots $ are defined recursively as the sequence $a(n)=n$ for $n\leq 1$ and $a(n) = a(n-1)+2a(n-2)$ otherwise. It can also be defined as the nearest integer to $\frac{2^n}{3}$. This sequence is sequence \href{https://oeis.org/A001045}{A001045} of the on-line encyclopedia of integer sequences (OEIS) \cite{oeis}.  Looking only at the positive integers and ignoring the duplicate $1$, we obtain the sequence $1,3,5,11,21,\ldots$ whose definition can be written as $f_{\neg P}(n) = \left\lfloor\frac{2^{n+1}+1}{3}\right\rfloor$.
The number of such numbers less than or equal to $n$ is $1,1,2,2,3,...$ which can be written as $C_{\neg P}(n) = \left\lfloor\log_2(3n+1)\right\rfloor-1$. Applying this to Theorem \ref{thm:onestep} results in the following formula for the non-Jacobsthal numbers (OEIS \href{https://oeis.org/A147613}{A147613}):

$$a(n) = \left\{ \begin{array}{ll} n+ \left\lfloor\log_2(3n+1)\right\rfloor &\mbox{if } \quad  n+ \left\lfloor\log_2(3n+1)\right\rfloor > \left\lfloor\frac{2^{\left\lfloor\log_2(3n+1)\right\rfloor+1}+1}{3}\right\rfloor \\
n+ \left\lfloor\log_2(3n+1)\right\rfloor-1 &\mbox{otherwise.}
\end{array}
\right.$$

\subsection{Interleaving Functions} \label{sec:interleaving}
Typically, the computation of $C_{\neg P}$ requires the inversion of $f_{\neg P}$, which can be difficult to do. For instance, if $f_{\neg P}(n)$ is a polynomial in $n$ of degree $5$ or more then by the Abel-Ruffini theorem it is in general not solvable in radicals.
However, if the sequence $\{f_{\neg P}(n)\}$ is interleaved with another sequence $\{\alpha(n)\}$ that is more easily invertible (either analytically or computationally), then we can leverage this to more efficiently find the complementary sequence $f_{P}$. More precisely, assume that we are
given a real-valued
increasing function $\alpha$ such that $\alpha(1)=1$ and
$f_{\neg P}(m-1)\leq \alpha(m)\leq f_{\neg P}(m)$ for all $m$. Note that we do not require that $\alpha$ is integer-valued. We define the increasing and integer-valued function $h(n) = \max\{m\in \mathbb{N}^+:\alpha(m)\leq n\}$. The idea is to choose $\alpha$ such that $h$ is easier to compute than inverting $f_{\neg P}$ .
We can then compute $f_{P}(n)$ with one evaluation of $h(n)$.

\begin{theorem}
If $f_{\neg P}$ is such that $f_{\neg P}(m+1)-f_{\neg P}(m)\geq m$ for all $m$, and $\alpha$, $h$ are as defined above, then
\begin{equation}  
f_{P}(n) = \left\{ \begin{array}{ll} n+h(n)+1 & \text{if } \quad n+h(n)\geq f_{\neg P}(h(n)+1)\\
n+h(n)-1 & \text{if } \quad n+h(n)\leq f_{\neg P}(h(n))\\
n+h(n) & \text{otherwise.} 
\end{array} 
\right.
\label{eqn:interleave}
\end{equation}
\label{thm:interleave}
\end{theorem}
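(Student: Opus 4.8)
The plan is to reduce the statement to Theorem \ref{thm:onestep} by showing that the interleaving function $h$ tracks the counting function $C_{\neg P}$ to within one, and then verifying that the three-way split in \eqref{eqn:interleave} is exactly what is needed to absorb this one-unit slack. The single fact that drives everything is the sandwich bound
$$C_{\neg P}(n)\le h(n)\le C_{\neg P}(n)+1.$$

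First I would prove this bound directly from the hypotheses on $\alpha$. Because $f_{\neg P}$ is strictly increasing, $C_{\neg P}(n)=\max\{m:f_{\neg P}(m)\le n\}$. For the lower bound, every $m\le C_{\neg P}(n)$ has $f_{\neg P}(m)\le n$, and then $\alpha(m)\le f_{\neg P}(m)\le n$ shows $m$ lies in the set defining $h(n)$, so $h(n)\ge C_{\neg P}(n)$. For the upper bound, any $m$ with $\alpha(m)\le n$ satisfies $f_{\neg P}(m-1)\le\alpha(m)\le n$, hence $m-1\le C_{\neg P}(n)$; taking the maximum over such $m$ gives $h(n)\le C_{\neg P}(n)+1$. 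Writing $c=C_{\neg P}(n)$, the bound says $h(n)\in\{c,c+1\}$, and I would split the remainder of the argument accordingly. Throughout I would use that $\alpha(1)=1\le n$ forces $h(n)\ge 1$.

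In the case $h(n)=c$ (so $c\ge 1$), I would note that $n\ge f_{\neg P}(c)$ gives $n+h(n)\ge f_{\neg P}(c)+1>f_{\neg P}(h(n))$, so the second branch of \eqref{eqn:interleave} cannot fire; the surviving first and third branches then read off as $n+c+\llbracket n+c\ge f_{\neg P}(c+1)\rrbracket$, which is exactly Theorem \ref{thm:onestep}. In the case $h(n)=c+1$, I would first invoke the growth hypothesis: from $f_{\neg P}(c+1)\ge n+1$ one gets $f_{\neg P}(h(n)+1)=f_{\neg P}(c+2)\ge (c+1)+f_{\neg P}(c+1)>n+h(n)$, ruling out the first branch. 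It then remains to match the two surviving branches against Theorem \ref{thm:onestep} rewritten with $c=h(n)-1$, namely $f_P(n)=n+h(n)-1+\llbracket n+h(n)-1\ge f_{\neg P}(h(n))\rrbracket$: the condition $n+h(n)\le f_{\neg P}(h(n))$ yields $n+h(n)-1$ on both sides, and its negation yields $n+h(n)$ on both sides.

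The heart of the argument, and the step I expect to be the main obstacle, is the bookkeeping of which of the three branches of \eqref{eqn:interleave} can actually occur in each case; the sandwich bound reduces everything to Theorem \ref{thm:onestep}, but one must use the growth hypothesis $f_{\neg P}(m+1)-f_{\neg P}(m)\ge m$ in just the right place to eliminate the impossible branch before the threshold comparison becomes routine. I would also verify the boundary situation $c=0$ separately: there $h(n)=1=c+1$, and since $f_{\neg P}(1)\ge n+1$ the second branch returns $n+h(n)-1=n$, matching $f_P(n)=n$.
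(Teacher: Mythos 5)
Your proposal is correct and takes essentially the same route as the paper: both reduce the statement to Theorem~\ref{thm:onestep} via the key fact that $h(n)\in\{C_{\neg P}(n),\,C_{\neg P}(n)+1\}$, with the value determined by whether $f_{\neg P}(h(n))\leq n$, and both use the growth hypothesis to rule out the incompatible branch of Eq.~(\ref{eqn:interleave}). The only difference is organizational --- you prove the sandwich bound explicitly and case on the value of $h(n)-C_{\neg P}(n)$, whereas the paper cases on the branch conditions themselves and states the correspondence $C_{\neg P}(n)=h(n)$ or $h(n)-1$ more tersely.
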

 
\begin{proof}
The conditions on $f_{\neg P}$ imply that we can apply Theorem \ref{thm:onestep}. Next note that 
$ n+h(n)\geq f_{\neg P}(h(n)+1)\geq f_{\neg P}(h(n))+h(n)$ implies $f_{\neg P}(h(n))\leq n$ and that
$n+h(n)\leq f_{\neg P}(h(n))$ implies $n< f_{\neg P}(h(n))$ since $h(n)\geq 1$.
The result then follows from the fact that $C_{\neg P}(n) = h(n)$ if $f_{\neg P}(h(n))\leq n$ and $C_{\neg P}(n) = h(n)-1$ otherwise.
\end{proof} 

This result can be further simplified depending on how close $f_{\neg P}$ and $\alpha$ are.

\begin{corollary}
Given the hypothesis of Theorem \ref{thm:interleave},
if $f_{\neg P}(m)-\alpha(m)< m$, then
\begin{equation*} f_{P}(n) = \left\{ \begin{array}{ll} n+h(n)+1 & \mbox{if } \quad n+h(n)\geq f_{\neg P}(h(n)+1)\\
 n+h(n) & \mbox{otherwise.}
 \end{array} 
 \right.
 \label{eqn:interleave2a}
 \end{equation*}
 \label{cor:interleave2a}
\end{corollary}
\begin{proof}
Note that by definiton of $h$, $\alpha(h(n))\leq n$.
Out of the three conditions in Equation (\ref{eqn:interleave}), the second condition is never satisfied by hypothesis since otherwise it reaches the contradiction  $n+h(n)-1 < f_{\neg P}(h(n)) \leq h(n)+\alpha(h(n))-1 \leq n+h(n)-1$.
\end{proof}

\begin{corollary}
Given the hypothesis of Theorem \ref{thm:interleave},
if $f_{\neg P}(m)-\alpha(m)\geq m$ for $m>1$, then
\begin{equation*} f_{P}(n) = \left\{ \begin{array}{ll} n+h(n) & \mbox{if } \quad n+h(n)> f_{\neg P}(h(n))\\
 n+h(n)-1 & \mbox{otherwise.}
 \end{array} 
 \right.
 \label{eqn:interleave2b}
 \end{equation*}
 \label{cor:interleave2b}
\end{corollary}
\begin{proof}
Note that by definiton of $h$, $\alpha(h(n)+1)> n$.
Out of the three conditions in Equation (\ref{eqn:interleave}), the first condition is never satisfied by hypothesis since otherwise it reaches the contradiction  $n+h(n)\geq f_{\neg P}(h(n)+1)\geq h(n)+\alpha(h(n)+1)>n+h(n)$.
\end{proof}

To illustrate, we will use these results to obtain formulas for the non-$k$-gonal numbers, the non-$k$-gonal-pyramidal numbers, the non-$k$-simplex numbers, the non-sum-of-$k$-th-powers and the non-centered-$k$-gonal numbers. Furthermore, we choose $h$ and $\alpha$ such that these formulas can be implemented algorithmically using integer arithmetic.

\subsubsection{Non-$k$-gonal Numbers} \label{sec:polygonal}
The {\em $n$-th $k$-gonal numbers} are defined as $T(k,n) = \frac{(k-2)n(n-1)}{2}+n$ for $k\geq 2$ and $n\geq 1$. For $k=2$, the $2$-gonal numbers are simply the natural numbers and thus there are no non-$2$-gonal numbers. 
In this section we give a general formula for the $n$-th non-$k$-gonal number for $k\geq 3$. 

\begin{theorem}
\label{thm:polygonal}
The $n$-th non-$k$-gonal number ($k\geq 3$) is given by
\begin{equation} a(n) = \left\{ \begin{array}{ll} n+\left\lfloor\sqrt{\frac{2n}{k-2}}\right\rfloor+1 & \mbox{if } \quad 2n > (k-2)\left\lfloor\sqrt{\frac{2n}{k-2}}\right\rfloor\left(\left\lfloor\sqrt{\frac{2n}{k-2}}\right\rfloor +1\right)\\[5pt]
 n+\left\lfloor\sqrt{\frac{2n}{k-2}}\right\rfloor & \mbox{otherwise,} 
 \end{array} 
 \right.
 \label{eqn:polygonal}
 \end{equation}
  i.e., $$a(n) = n+\left\lfloor\sqrt{\frac{2n}{k-2}}\right\rfloor+\left\llbracket 2n > (k-2)\left\lfloor\sqrt{\frac{2n}{k-2}}\right\rfloor\left(\left\lfloor\sqrt{\frac{2n}{k-2}}\right\rfloor +1\right)\right\rrbracket.$$
For $3\leq k\leq 10$, this can be written as $a(n) = n+\left\lfloor\sqrt{\frac{2n}{k-2}}+\frac{1}{2}\right\rfloor$.
 \end{theorem}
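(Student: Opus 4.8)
The plan is to apply Theorem~\ref{thm:onestep} with $P(m)$ the statement ``$m$ is not $k$-gonal'', so that $f_{\neg P}(m)=T(k,m)$. First I would verify the hypothesis: since
$f_{\neg P}(m+1)-f_{\neg P}(m)=T(k,m+1)-T(k,m)=(k-2)m+1\geq m$ for every $k\geq 3$, Theorem~\ref{thm:onestep} applies and yields $f_P(n)=n+C+\llbracket n+C\geq T(k,C+1)\rrbracket$ with $C:=C_{\neg P}(n)$. Writing $h=\lfloor\sqrt{2n/(k-2)}\rfloor$, the whole task then reduces to showing that this expression, phrased in terms of the exact count $C$, agrees with the claimed formula, phrased in terms of the more easily computed proxy $h$.

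The key preliminary step is to pin down how far $C$ can be from $h$. By definition $h$ satisfies $(k-2)h^2\leq 2n<(k-2)(h+1)^2$, while $C=\max\{m:2T(k,m)\leq 2n\}$ and $2T(k,m)=(k-2)m^2+(4-k)m$. Substituting the bounds on $2n$ and simplifying the quadratics, I would show $2T(k,h-1)\leq(k-2)h^2\leq 2n$ and $2T(k,h+2)\geq(k-2)(h+1)^2>2n$, which together force $C\in\{h-1,h,h+1\}$. For $k\in\{3,4\}$ one checks that an interleaving function $\alpha$ (for instance $\alpha(m)=\lceil(k-2)m^2/2\rceil$) satisfies the hypotheses of Corollary~\ref{cor:interleave2a}, so the result is immediate there; the reason for arguing via Theorem~\ref{thm:onestep} directly is that for $k\geq 5$ the count $C$ can genuinely exceed $h$, so no such interleaving exists and the case $C=h+1$ must be treated by hand.

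The main obstacle is reconciling the two Iverson conditions across the three cases, and the unifying device is a parity normalisation: since $h(h+1)$ is even, both $(k-2)h(h+1)$ and $2n$ are even, so $2n>(k-2)h(h+1)$ is equivalent to $2n\geq(k-2)h(h+1)+2$. When $C=h$, I would rewrite the one-step condition $n+h\geq T(k,h+1)$ as $2n\geq(k-2)h(h+1)+2$, which is exactly the parity-normalised formula condition, so the two expressions agree. When $C=h-1$ (which forces $h\geq 2$, since $C\geq 1$ always), the formula bracket vanishes because $2n<2T(k,h)\leq(k-2)h(h+1)$, the last step being $(4-k)h\leq(k-2)h$, i.e. $k\geq 3$, while the one-step bracket equals $1$ because $2n\geq(k-2)h^2\geq(k-2)h(h-1)+2$; hence both expressions equal $h$. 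When $C=h+1$, the formula bracket equals $1$ because $2n\geq 2T(k,h+1)>(k-2)h(h+1)$, while the one-step bracket vanishes because $2n<(k-2)(h+1)^2\leq(k-2)(h+1)(h+2)+2=2T(k,h+2)-2(h+1)$; hence both expressions equal $h+1$. Verifying these three inequality chains is the real content of the argument, and each becomes routine once the parity normalisation and the defining bounds $(k-2)h^2\leq 2n<(k-2)(h+1)^2$ are in hand.

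Finally, for the simplified form valid for $3\leq k\leq 10$, I would use that $\lfloor\sqrt{y}+\tfrac12\rfloor$ is the nearest integer to $\sqrt{y}$, so with $y=2n/(k-2)$ it equals $h+1$ precisely when $2n/(k-2)\geq(h+\tfrac12)^2=h(h+1)+\tfrac14$, i.e. when $2n\geq(k-2)h(h+1)+(k-2)/4$. Comparing with the bracket threshold $2n\geq(k-2)h(h+1)+2$ and using once more that $2n$ runs through even integers (so the first qualifying value is $(k-2)h(h+1)+2$), the two thresholds select the same values of $2n$ exactly when the even value $(k-2)h(h+1)+2$ already satisfies the rounding condition, i.e. when $2/(k-2)\geq\tfrac14$, equivalently $k\leq 10$. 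This is precisely where the stated restriction comes from, and it is the step where the simple rounding formula breaks for larger $k$.
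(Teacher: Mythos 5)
Your proof is correct, but it follows a genuinely different route from the paper. The paper's proof is a two-line application of its interleaving machinery: it takes $\alpha(m)=\frac{k-2}{2}(m-1)^2$, which sandwiches $T(k,m)$ between consecutive terms, reads off $h(n)=\lfloor\sqrt{2n/(k-2)}\rfloor+1$ (note: shifted by one relative to your $h$), checks $T(k,m)-\alpha(m)=\frac{k}{2}(m-1)+1\geq m$, and invokes Corollary \ref{cor:interleave2b}; all casework is thereby delegated to the general corollary. You instead apply Theorem \ref{thm:onestep} directly, localize the exact count via $2T(k,h-1)\leq (k-2)h^2\leq 2n<(k-2)(h+1)^2\leq 2T(k,h+2)$ to get $C\in\{h-1,h,h+1\}$, and reconcile the two Iverson brackets case by case with the parity normalization $2n>(k-2)h(h+1)\Leftrightarrow 2n\geq(k-2)h(h+1)+2$; I verified each of your three inequality chains (including the $h\geq 2$ point in the $C=h-1$ case and the identity $2T(k,h+2)-2(h+1)=(k-2)(h+1)(h+2)+2$) and they hold. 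What each approach buys: the paper's argument is shorter and reuses its general framework, while yours is self-contained and, notably, more robust --- your observation that $C=h+1$ actually occurs for $k\geq 5$ correctly explains why no interleaving can produce your unshifted $h$, and your direct argument also covers $k=3$ cleanly, whereas the paper's chosen $\alpha$ does not literally satisfy the interleaving hypothesis there (one computes $\alpha(m)-T(k,m-1)=\frac{(k-4)(m-1)}{2}$, which is negative for $k=3$, and indeed $C_{\neg P}(2)=h(2)-2$ for triangular numbers), so your proof closes a small gap the paper glosses over. Your derivation of the $3\leq k\leq 10$ nearest-integer form is essentially the paper's own parity argument ($2n$ and $(k-2)h(h+1)$ both even, $\frac{k-2}{4}\leq 2$), with the harmless caveat that your final remark that the rounding formula ``breaks'' for $k>10$ is only shown to be a failure of the equivalence argument, not exhibited by a counterexample --- but that claim is not part of the theorem.
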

\begin{proof}
Note that $T(k,n+1)-T(k,n) = (k-2)n+1\geq n$ for $k\geq 3$.
Furthermore, $T(k,n)$ is interleaved with $\alpha(n) = \frac{k-2}{2}(n-1)^2$ with corresponding $h(n) = \left\lfloor \sqrt{\frac{2n}{k-2}} \right\rfloor + 1$.
Since $T(k,n)-\alpha(n)=\frac{k}{2}(n-1)+1\geq n$, we can apply Corollary \ref{cor:interleave2b} and obtain Equation (\ref{eqn:polygonal}).

Assume that $k\leq 10$. The inequality
$\sqrt{\frac{2n}{k-2}}- \left\lfloor \sqrt{\frac{2n}{k-2}} \right\rfloor\geq \frac{1}{2}$ is equivalent to $2n\geq (k-2) \left\lfloor \sqrt{\frac{2n}{k-2}} \right\rfloor\left( \left\lfloor \sqrt{\frac{2n}{k-2}} \right\rfloor+1\right)+\frac{k-2}{4}$. Since $2n$ and $(k-2) \left\lfloor \sqrt{\frac{2n}{k-2}} \right\rfloor \left( \left\lfloor \sqrt{\frac{2n}{k-2}} \right\rfloor+1\right)$ are both even and $\frac{k-2}{4}\leq 2$ this is equivalent to $2n\geq (k-2) \left\lfloor \sqrt{\frac{2n}{k-2}} \right\rfloor\left( \left\lfloor \sqrt{\frac{2n}{k-2}} \right\rfloor+1\right)+2$ which in turn is equivalent to  $2n>(k-2) \left\lfloor \sqrt{\frac{2n}{k-2}} \right\rfloor\left( \left\lfloor \sqrt{\frac{2n}{k-2}} \right\rfloor+1\right)$ which is exactly the condition in 
Equation (\ref{eqn:polygonal}).
\end{proof}

Recall that $\left\lfloor \sqrt{\frac{2n}{k-2}} \right\rfloor$ can be computed as $\left\lfloor \sqrt{\left\lfloor\frac{2n}{k-2}\right\rfloor} \right\rfloor$. For instance in Python 3.x this can be implemented as \verb+isqrt(2*n//(k-2))+.
By setting $k=3$ or $k=4$ to Theorem \ref{thm:polygonal}, we get the following Corollary:
\begin{corollary} \label{cor:squaretriangular}
The $n$-th non-triangular number is given by $n+\lfloor\sqrt{2n}+\frac{1}{2}\rfloor$.
The $n$-th non-square number is given by $n+\lfloor\sqrt{n}+\frac{1}{2}\rfloor$.
\end{corollary}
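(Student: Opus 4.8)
The plan is to obtain both formulas by direct specialization of Theorem \ref{thm:polygonal}, since all the analytic work—verifying the growth condition, the interleaving with $\alpha$, and the passage to the simplified expression $n+\left\lfloor\sqrt{\frac{2n}{k-2}}+\frac{1}{2}\right\rfloor$—has already been carried out there for every $k$ in the range $3\leq k\leq 10$.

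First I would identify the two relevant sequences concretely from the definition $T(k,n)=\frac{(k-2)n(n-1)}{2}+n$. Setting $k=3$ gives $T(3,n)=\frac{n(n-1)}{2}+n=\frac{n(n+1)}{2}$, which are exactly the triangular numbers; setting $k=4$ gives $T(4,n)=n(n-1)+n=n^2$, the square numbers. Hence the $n$-th non-triangular number is the $n$-th non-$3$-gonal number, and the $n$-th non-square number is the $n$-th non-$4$-gonal number.

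Next, I would observe that both $k=3$ and $k=4$ lie in the interval $3\leq k\leq 10$, so the simplified form of Theorem \ref{thm:polygonal}, namely that the $n$-th non-$k$-gonal number equals $n+\left\lfloor\sqrt{\frac{2n}{k-2}}+\frac{1}{2}\right\rfloor$, applies directly. Substituting $k=3$ yields $k-2=1$, so $\frac{2n}{k-2}=2n$ and the formula becomes $n+\left\lfloor\sqrt{2n}+\frac{1}{2}\right\rfloor$; substituting $k=4$ yields $k-2=2$, so $\frac{2n}{k-2}=n$ and the formula becomes $n+\left\lfloor\sqrt{n}+\frac{1}{2}\right\rfloor$. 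These are exactly the two claimed expressions.

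There is no genuine obstacle here: this statement is a corollary in the strict sense, with the entire substance supplied by Theorem \ref{thm:polygonal}. The only point that must be checked—and it is immediate—is that the two chosen values of $k$ fall within the range for which the clean half-integer-shift form is valid, so that one avoids having to restate the piecewise Iverson-bracket condition of Eq.\ (\ref{eqn:polygonal}).
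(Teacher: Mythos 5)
Your proposal is correct and matches the paper's own derivation exactly: the paper obtains this corollary by setting $k=3$ and $k=4$ in Theorem \ref{thm:polygonal} and invoking the simplified form $n+\left\lfloor\sqrt{\frac{2n}{k-2}}+\frac{1}{2}\right\rfloor$ valid for $3\leq k\leq 10$. Your identification of $T(3,n)$ with the triangular numbers and $T(4,n)=n^2$ with the squares, followed by direct substitution, is precisely the intended argument.
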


In discussing \cite[Example 4]{Lambek1954}, it was shown that the $n$-th non-square number is 
$n+\lfloor\sqrt{n}+\frac{1}{2}\rfloor$, while discussing \cite[Example 6]{Lambek1954} it was shown that $n+\lfloor\sqrt{n+\lfloor\sqrt{n}\rfloor}\rfloor$ is a new formula for the $n$-th non-square number. Theorem \ref{thm:twostep} along with Corollary \ref{cor:squaretriangular} show the equivalence of these two formulas. 

In \cite[Example 5]{Lambek1954} it was reported that the $n$-th non-triangular number is $n+\lfloor \sqrt{2n}+\frac{1}{2}\rfloor$ which also follows from Corollary  \ref{cor:squaretriangular}.

\begin{theorem}
\label{thm:polygonal2}
For $k\geq 3$, let $r$ be a real number such that $-1\leq r\leq k-4$. Then the $n$-th non-$k$-gonal number is given by
\begin{equation} a(n) = \left\{ \begin{array}{ll} n+\left\lfloor\sqrt{\frac{2n+r}{k-2}}\right\rfloor+1 & \mbox{if } \quad 2n > (k-2)\left\lfloor\sqrt{\frac{2n+r}{k-2}}\right\rfloor\left(\left\lfloor\sqrt{\frac{2n+r}{k-2}}\right\rfloor +1\right)\\[5pt]
 n+\left\lfloor\sqrt{\frac{2n+r}{k-2}}\right\rfloor & \mbox{otherwise.} 
 \end{array} 
 \right.
 \label{eqn:polygonal2}
 \end{equation}
By choosing $r=\left\lfloor\frac{k+1}{4}\right\rfloor -2$, this can be simplified as $a(n) = n+\left\lfloor\sqrt{\frac{2n-2+\left\lfloor\frac{k+1}{4}\right\rfloor}{k-2}}+\frac{1}{2}\right\rfloor$.
 \end{theorem}
\begin{proof}
Let $\alpha(n) = \frac{k-2}{2}(n-1)^2-\frac{r}{2}$ with corresponding $h(n) = \left\lfloor \sqrt{\frac{2n+r}{k-2}} \right\rfloor + 1$.
Note that since $r\geq -1$, $T(k,n)-\alpha(n)=\frac{k}{2}(n-1)+1+\frac{r}{2}\geq \frac{k}{2}(n-1)+\frac{1}{2} \geq n$ for $n>1$. Furthermore, $\alpha(n+1)-T(k,n) = \frac{(k-4)n}{2}-\frac{r}{2}\geq 0$
so we can apply Corollary \ref{cor:interleave2b} and obtain Equation (\ref{eqn:polygonal2}).

Next,
$\sqrt{\frac{2n+r}{k-2}}- \left\lfloor \sqrt{\frac{2n+r}{k-2}} \right\rfloor\geq \frac{1}{2}$ is equivalent to 
$$2n\geq (k-2) \left\lfloor \sqrt{\frac{2n+r}{k-2}} \right\rfloor\left( \left\lfloor \sqrt{\frac{2n+r}{k-2}} \right\rfloor+1\right)+\frac{k-2}{4}-r.$$ By picking $r = \left\lceil\frac{k-10}{4}\right\rceil =  \left\lfloor\frac{k+1}{4}\right\rfloor -2 $, this ensures
that $\frac{k-2}{4}-r\leq 2$. Furthermore, $r\geq -1$ for $k\geq 3$. Since $2n$ and $(k-2) \left\lfloor \sqrt{\frac{2n+r}{k-2}} \right\rfloor \left( \left\lfloor \sqrt{\frac{2n+r}{k-2}} \right\rfloor+1\right)$ are both even this is equivalent to $2n\geq (k-2) \left\lfloor \sqrt{\frac{2n+r}{k-2}} \right\rfloor\left( \left\lfloor \sqrt{\frac{2n+r}{k-2}} \right\rfloor+1\right)+2$ which in turn is equivalent to $2n>(k-2) \left\lfloor \sqrt{\frac{2n+r}{k-2}} \right\rfloor\left( \left\lfloor \sqrt{\frac{2n+r}{k-2}} \right\rfloor+1\right)$, i.e., the condition in 
Equation (\ref{eqn:polygonal2}).
\end{proof}

A consequence of Theorems \ref{thm:polygonal} and \ref{thm:polygonal2} is that for 
$3\leq k\leq 6$,  the $n$-th non-$k$-gonal number is equal to $n+\left\lfloor\sqrt{\frac{2n}{k-2}}+\frac{1}{2}\right\rfloor=n+\left\lfloor\sqrt{\frac{2n-1}{k-2}}+\frac{1}{2}\right\rfloor$.

\subsubsection{Non-second-$k$-gonal Numbers}\label{sec:second-polygonal}
For $k\geq 5$, the {\em $n$-th second $k$-gonal number} is defined as $T_2(k,n) = \frac{k-2}{2}n^2+\frac{k-4}{2}n$. The first differences are given by $T_2(k,n+1)-T_2(k,n)=(k-2)n+k-3\geq n$.
We choose $\alpha(n) = \frac{k-2}{2}n^2$ with $h(n) = \left\lfloor\sqrt{\frac{2n}{k-2}}\right\rfloor$.
Since $T_2(k,n)-\alpha(n)=\frac{k-4}{2}n$, we can apply Corollary \ref{cor:interleave2a} for $k=5$ and apply Corollary \ref{cor:interleave2b} for $k>5$ and obtain the following result.

\begin{theorem}
\label{thm:secondpentagonal}
The $n$-th non-second-pentagonal number is given by
\begin{equation*} a(n) = \left\{ \begin{array}{ll} n+\left\lfloor\sqrt{\frac{2n}{3}}\right\rfloor+1 & \mbox{if } \quad 2n \geq \left\lfloor\sqrt{\frac{2n}{3}}\right\rfloor\left(3\left\lfloor\sqrt{\frac{2n}{3}}\right\rfloor +5\right)+4\\[5pt]
 n+\left\lfloor\sqrt{\frac{2n}{3}}\right\rfloor & \mbox{otherwise.} 
 \end{array} 
 \right.
 \label{eqn:secondpentagonal}
 \end{equation*}
 \end{theorem}

\begin{theorem}
\label{thm:secondpolygonal}
The $n$-th non-second-$k$-gonal number ($k>5$) is given by
\begin{equation} a(n) = \left\{ \begin{array}{ll} n+\left\lfloor\sqrt{\frac{2n}{k-2}}\right\rfloor & \mbox{if } 2n > (k-2)\left\lfloor\sqrt{\frac{2n}{k-2}}\right\rfloor\left(\left\lfloor\sqrt{\frac{2n}{k-2}}\right\rfloor +1\right)-4\left\lfloor\sqrt{\frac{2n}{k-2}}\right\rfloor\\[5pt]
 n+\left\lfloor\sqrt{\frac{2n}{k-2}}\right\rfloor-1 & \mbox{otherwise.} 
 \end{array} 
 \right.
 \label{eqn:secondpolygonal}
 \end{equation}
 \end{theorem}

For $k=6$, the condition in Equation (\ref{eqn:secondpolygonal}) becomes $n>2\lfloor\sqrt{\frac{n}{2}}\rfloor^2$ which is satisfied if and only if $\frac{n}{2}$ is not a square. Thus, the $n$-th non-second-hexagonal number is $n+\lfloor\sqrt{\frac{n}{2}}\rfloor-1$ if $\frac{n}{2}$ is a square and equal to $n+\lfloor\sqrt{\frac{n}{2}}\rfloor$ otherwise. Equivalently, the $n$-th non-second-hexagonal number is  $n+\lceil\sqrt{\frac{n}{2}}\rceil - 1$.

\subsubsection{Non-centered-$k$-gonal Numbers}
The {\em $n$-th centered $k$-gonal number} is defined as $\frac{kn(n+1)}{2}+1$.
Using $\alpha(m) = \frac{km^2}{2}$ and the corresponding $h(n) = \left\lfloor\sqrt{\frac{2n}{k}}\right\rfloor$, and the fact that $\frac{kn(n+1)}{2}+1-\frac{kn^2}{2} = \frac{kn}{2}+1\geq n$,
Corollary \ref{cor:interleave2b} can be used to show the following result.
\begin{theorem}\label{thm:centered}
The $n$-th non-centered-$k$-gonal number is
\begin{equation*} a(n) = \left\{ \begin{array}{ll} n+\left\lfloor\sqrt{\frac{2n}{k}}\right\rfloor & \mbox{if } \quad 2\left(n+\left\lfloor\sqrt{\frac{2n}{k}}\right\rfloor-1\right)> k\left\lfloor\sqrt{\frac{2n}{k}}\right\rfloor \left(\left\lfloor\sqrt{\frac{2n}{k}}\right\rfloor+1\right)\\[5pt]
 n+\left\lfloor\sqrt{\frac{2n}{k}}\right\rfloor-1 & \mbox{otherwise.} 
 \end{array} 
 \right.
 \label{eqn:centered}
 \end{equation*}
i.e.,
$$a(n) = n+\left\lfloor\sqrt{\frac{2n}{k}}\right\rfloor-\left\llbracket 2\left(n+\left\lfloor\sqrt{\frac{2n}{k}}\right\rfloor-1\right)\leq k\left\lfloor\sqrt{\frac{2n}{k}}\right\rfloor \left(\left\lfloor\sqrt{\frac{2n}{k}}\right\rfloor+1\right)\right\rrbracket.$$
\end{theorem}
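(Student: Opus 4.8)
The plan is to realize Eq. (\ref{eqn:centered}) as a direct instance of Corollary \ref{cor:interleave2b}, taking the complementary sequence to be $f_{\neg P}(m)=\frac{km(m+1)}{2}+1$, the $m$-th centered $k$-gonal number, and the interleaving function to be $\alpha(m)=\frac{km^2}{2}$ as supplied in the statement. Essentially all of the work is checking the hypotheses; once these are in place the formula drops out by a purely algebraic substitution.

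First I would verify the growth condition required by Theorem \ref{thm:interleave}: a one-line computation gives $f_{\neg P}(m+1)-f_{\neg P}(m)=k(m+1)\geq m$. Next I would confirm the interleaving $f_{\neg P}(m-1)\leq\alpha(m)\leq f_{\neg P}(m)$; the upper inequality $\frac{km^2}{2}\leq\frac{km(m+1)}{2}+1$ is immediate, while the lower inequality $\frac{k(m-1)m}{2}+1\leq\frac{km^2}{2}$ reduces to $km\geq 2$, which holds for $k\geq 3$. From $\alpha(m)\leq n\Leftrightarrow m\leq\sqrt{2n/k}$ the function $h(n)=\max\{m\in\mathbb{N}^+:\alpha(m)\leq n\}$ evaluates to $\lfloor\sqrt{2n/k}\rfloor$, as claimed. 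The extra hypothesis of Corollary \ref{cor:interleave2b}, that $f_{\neg P}(m)-\alpha(m)\geq m$ for $m>1$, is precisely the inequality $\frac{km}{2}+1\geq m$ recorded in the statement. Corollary \ref{cor:interleave2b} then yields $a(n)=n+h(n)$ when $n+h(n)>f_{\neg P}(h(n))$ and $a(n)=n+h(n)-1$ otherwise, and substituting $f_{\neg P}(h(n))=\frac{k\,h(n)(h(n)+1)}{2}+1$ turns the branch condition into $2(n+h(n)-1)>k\,h(n)(h(n)+1)$, which is exactly the condition in Eq. (\ref{eqn:centered}) once $h(n)=\lfloor\sqrt{2n/k}\rfloor$ is inserted.

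The one point that needs care, and which I expect to be the only real obstacle, is the boundary behavior at small $n$. Because $\alpha(1)=k/2\neq 1$, the normalization $\alpha(1)=1$ assumed in the interleaving setup fails, so $h(n)=0$ for the smallest values of $n$ (those with $2n<k$), and there the identification of $C_{\neg P}(n)$ underlying Corollary \ref{cor:interleave2b} no longer applies verbatim. I would therefore either restrict the statement to $n$ with $h(n)\geq 1$ or check these few initial indices directly, confirming that the interleaving inequalities together with the maximality property $\alpha(h(n)+1)>n$ still go through in the intended range. Everything outside this edge case reduces to the routine verifications above.
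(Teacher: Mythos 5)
Your proposal is correct and follows essentially the same route as the paper, which likewise derives Eq.~(\ref{eqn:centered}) by invoking Corollary~\ref{cor:interleave2b} with $f_{\neg P}(m)=\frac{km(m+1)}{2}+1$, $\alpha(m)=\frac{km^2}{2}$, $h(n)=\left\lfloor\sqrt{\frac{2n}{k}}\right\rfloor$, and the bound $f_{\neg P}(m)-\alpha(m)=\frac{km}{2}+1\geq m$, then substituting to get the branch condition $2(n+h(n)-1)>k\,h(n)(h(n)+1)$. Your boundary caveat (that $\alpha(1)=k/2\neq 1$, so $h(n)=0$ when $2n<k$) is precisely the issue the paper absorbs in its remark immediately after the theorem: since the centered $k$-gonal numbers start at index $0$, the formula returns the $n$-th \emph{nonnegative} non-centered-$k$-gonal number, giving $a(1)=0$.
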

Since the $k$-gonal numbers are defined starting from $n=0$, the formula above will compute the $n$-th nonnegative
non-centered-$k$-gonal number, i.e., it returns $0$ when $n=1$ and lists the positive
non-centered-$k$-gonal numbers starting from $n=2$.
For instance, for $k=4$, the $n$-th non-centered-square number (which except for the first term is OEIS sequence \href{https://oeis.org/A350757}{A350757})
is equal to 
\begin{equation*}  \left\{ \begin{array}{ll} n+\left\lfloor\sqrt{\frac{n}{2}}\right\rfloor & \mbox{if } \quad n>\left\lfloor\sqrt{\frac{n}{2}}\right\rfloor\left(2\left\lfloor\sqrt{\frac{n}{2}}\right\rfloor+1\right)+1\\[5pt]
n+\left\lfloor\sqrt{\frac{n}{2}}\right\rfloor- 1& \mbox{otherwise.}
\end{array}\right.
\end{equation*}

\subsubsection{Non-$k$-gonal-pyramidal Numbers}\label{sec:pyramidal}
For $k\geq 3$, the {\em $n$-th $k$-gonal pyramidal number} is defined as 
$$f_{\neg P}(n) = \frac{n(n+1)(n(k-2)-(k-5))}{6}.$$
By choosing $\alpha(m) = \frac{(k-2)m^3}{6}$, we have 
$h(n) =  \left\lfloor\sqrt[3]{\frac{6n}{k-2}}\right\rfloor$ and $$\alpha(n+1)-f_{\neg P}(n) = \frac{(n + 1)(k + n(3k - 9) - 2)}{6} \geq 0.$$
Since $f_{\neg P}(n)-\alpha(n) = \frac{n(3n+5-k)}{6}\geq 0$ for $k\leq 8$, this implies that we can apply Theorem \ref{thm:interleave} to obtain the following result.

\begin{theorem}\label{thm:pyramidal}
For $3\leq k\leq 8$, the $n$-th non-$k$-gonal-pyramidal number is
\begin{equation} a(n) = \left\{ \begin{array}{lll} 
n+\left\lfloor\sqrt[3]{\frac{6n}{k-2}}\right\rfloor+1 & \mbox{if } \quad 6n\geq &\left\lfloor\sqrt[3]{\frac{6n}{k-2}}\right\rfloor^3(k - 2) 
\\&&+ 3\left\lfloor\sqrt[3]{\frac{6n}{k-2}}\right\rfloor^2(k - 1) 
\\[5pt]&&+ \left\lfloor\sqrt[3]{\frac{6n}{k-2}}\right\rfloor(2k - 1) + 6
\\[5pt]
n+\left\lfloor\sqrt[3]{\frac{6n}{k-2}}\right\rfloor-1 & \mbox{if } \quad 6n\leq & \left\lfloor\sqrt[3]{\frac{6n}{k-2}}\right\rfloor\left(\left\lfloor\sqrt[3]{\frac{6n}{k-2}}\right\rfloor - 1\right)\times\\[5pt]&&\left(\left\lfloor\sqrt[3]{\frac{6n}{k-2}}\right\rfloor(k - 2) + k+1\right) \\
n+\left\lfloor\sqrt[3]{\frac{6n}{k-2}}\right\rfloor & \mbox{otherwise.} &
 \end{array} 
 \right.
 \label{eqn:pyramidal}
 \end{equation}
\end{theorem}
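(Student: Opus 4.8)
The plan is to invoke the interleaving result Theorem~\ref{thm:interleave} with the auxiliary function $\alpha(m)=\frac{(k-2)m^3}{6}$ already proposed above, and then to reduce its three conditions to the explicit polynomial inequalities appearing in Eq.~(\ref{eqn:pyramidal}).

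First I would check the two hypotheses of Theorem~\ref{thm:interleave}. For the growth condition $f_{\neg P}(m+1)-f_{\neg P}(m)\geq m$, the cleanest route is to note that the $k$-gonal pyramidal numbers are the partial sums of the $k$-gonal numbers, so the consecutive difference is $T(k,m+1)=\frac{(k-2)m(m+1)}{2}+(m+1)$, which for $k\geq 3$ already exceeds $m$; this needs no restriction on $k$. For the interleaving condition $f_{\neg P}(m-1)\leq\alpha(m)\leq f_{\neg P}(m)$ I would use the two factored identities recorded just before the statement: $\alpha(n+1)-f_{\neg P}(n)=\frac{(n+1)(k+n(3k-9)-2)}{6}\geq 0$ for $k\geq 3$, and $f_{\neg P}(n)-\alpha(n)=\frac{n(3n+5-k)}{6}$, which is nonnegative for every $n\geq 1$ exactly when $k\leq 8$. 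This is the step that pins down the admissible range $3\leq k\leq 8$. I would also record $h(n)=\max\{m:\frac{(k-2)m^3}{6}\leq n\}=\left\lfloor\sqrt[3]{\frac{6n}{k-2}}\right\rfloor$.

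With both hypotheses verified, Theorem~\ref{thm:interleave} delivers the three-way case split immediately, and the remaining work is purely algebraic. Writing $q:=h(n)=\left\lfloor\sqrt[3]{\frac{6n}{k-2}}\right\rfloor$, I would clear denominators in the two boundary conditions. The first, $n+q\geq f_{\neg P}(q+1)$, becomes $6n\geq 6f_{\neg P}(q+1)-6q$; factoring $6f_{\neg P}(q+1)=(q+1)(q+2)\bigl(q(k-2)+3\bigr)$, expanding, and subtracting $6q$ collapses the linear coefficient $q(2k+5)-6q=q(2k-1)$ and lands on $q^3(k-2)+3q^2(k-1)+q(2k-1)+6$, the first case of Eq.~(\ref{eqn:pyramidal}). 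The second, $n+q\leq f_{\neg P}(q)$, becomes $6n\leq 6f_{\neg P}(q)-6q$; expanding $6f_{\neg P}(q)=q(q+1)\bigl(q(k-2)-(k-5)\bigr)$ and subtracting $6q$ gives $q^3(k-2)+3q^2-q(k+1)$, which factors neatly as $q(q-1)\bigl(q(k-2)+k+1\bigr)$, the second case. The leftover branch then carries the value $n+h(n)$.

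Finally, I would note why the three-branch Theorem~\ref{thm:interleave} is invoked rather than one of its sharper corollaries. The quantity $f_{\neg P}(m)-\alpha(m)=\frac{m(3m+5-k)}{6}$ dips below $m$ for small $m$---at $m=2$ once $k\geq 6$, which defeats the hypothesis of Corollary~\ref{cor:interleave2b}---yet grows past $m$ for large $m$, which defeats Corollary~\ref{cor:interleave2a}; so near the top of the range no one-sided hypothesis holds and all three branches genuinely survive. The main obstacle is therefore not conceptual but bookkeeping: carrying the symbolic parameter $k$ through the two cubic expansions without a sign or coefficient error, and verifying that the boundary inequalities land exactly on the stated polynomials---the cancellation $2k+5-6=2k-1$ in the first case, and the factorisation of $q^3(k-2)+3q^2-q(k+1)$ as $q(q-1)(q(k-2)+k+1)$ in the second, being the two places a hand computation is most likely to slip.
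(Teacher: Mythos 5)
Your proof is correct and follows essentially the same route as the paper: it applies Theorem~\ref{thm:interleave} with $\alpha(m)=\frac{(k-2)m^3}{6}$ and $h(n)=\left\lfloor\sqrt[3]{\frac{6n}{k-2}}\right\rfloor$, using the same two identities $\alpha(n+1)-f_{\neg P}(n)=\frac{(n+1)(k+n(3k-9)-2)}{6}\geq 0$ and $f_{\neg P}(n)-\alpha(n)=\frac{n(3n+5-k)}{6}\geq 0$ for $k\leq 8$ to verify the interleaving. You additionally make explicit what the paper leaves implicit --- the growth hypothesis via $f_{\neg P}(m+1)-f_{\neg P}(m)=T(k,m+1)\geq m$ and the two cubic expansions, both of which check out, including the cancellation $2k+5-6=2k-1$ and the factorisation $q^3(k-2)+3q^2-q(k+1)=q(q-1)\bigl(q(k-2)+k+1\bigr)$.
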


If $k\leq 5$, then $f_{\neg P}(n)-\alpha(n)\geq n$ and we can apply Corollary \ref{cor:interleave2b} to obtain the following result.
\begin{corollary}\label{cor:pyramidal}
For $3\leq k\leq 5$, the $n$-th non-$k$-gonal-pyramidal number is
\begin{equation} a(n) = \left\{ \begin{array}{lll} 
n+\left\lfloor\sqrt[3]{\frac{6n}{k-2}}\right\rfloor-1 & \mbox{if } \quad 6n\leq &\left\lfloor\sqrt[3]{\frac{6n}{k-2}}\right\rfloor\left(\left\lfloor\sqrt[3]{\frac{6n}{k-2}}\right\rfloor - 1\right)\times
\\[5pt]&&\left(\left\lfloor\sqrt[3]{\frac{6n}{k-2}}\right\rfloor(k - 2) + k+1\right) \\
n+\left\lfloor\sqrt[3]{\frac{6n}{k-2}}\right\rfloor & \mbox{otherwise.} &
 \end{array} 
 \right.
 \label{eqn:pyramidal2}
 \end{equation}
\end{corollary}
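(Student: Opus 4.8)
The plan is to recognize that this corollary is just the specialization of Corollary \ref{cor:interleave2b} to the $k$-gonal pyramidal case, once we upgrade the gap hypothesis from $f_{\neg P}(m)-\alpha(m)\geq 0$ to $f_{\neg P}(m)-\alpha(m)\geq m$ for $m>1$. All the structural ingredients are already in place from the setup preceding Theorem \ref{thm:pyramidal}: the interleaving $f_{\neg P}(m-1)\leq\alpha(m)\leq f_{\neg P}(m)$ and the growth condition $f_{\neg P}(m+1)-f_{\neg P}(m)\geq m$, so the hypotheses of Theorem \ref{thm:interleave}, and hence those of its corollaries, are available throughout $3\leq k\leq 8$.

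First I would establish the strengthened gap bound. Using the already-computed identity $f_{\neg P}(m)-\alpha(m)=\frac{m(3m+5-k)}{6}$, the requirement $f_{\neg P}(m)-\alpha(m)\geq m$ reduces, after cancelling $m>0$ and clearing the $6$, to $3m+5-k\geq 6$, i.e.\ $m\geq\frac{k+1}{3}$. For $3\leq k\leq 5$ we have $\frac{k+1}{3}\leq 2$, so the inequality holds for every $m\geq 2$, which is exactly the range $m>1$ demanded by Corollary \ref{cor:interleave2b}. This is precisely why the restriction to $k\leq 5$ appears: for $k\in\{6,7,8\}$ the gap drops below $m$ at small $m$, so only the weaker three-branch Theorem \ref{thm:pyramidal} survives there.

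Applying Corollary \ref{cor:interleave2b} then immediately yields a two-branch formula, $f_{P}(n)=n+h(n)$ when $n+h(n)>f_{\neg P}(h(n))$ and $f_{P}(n)=n+h(n)-1$ otherwise, with $h(n)=\left\lfloor\sqrt[3]{\frac{6n}{k-2}}\right\rfloor$. The remaining task is to rewrite the threshold in the polynomial form of Eq.~(\ref{eqn:pyramidal2}). Writing $t=h(n)$ and substituting $f_{\neg P}(t)=\frac{t(t+1)(t(k-2)-(k-5))}{6}$, the ``otherwise'' condition $n+t\leq f_{\neg P}(t)$ becomes $6n\leq t(t+1)(t(k-2)-(k-5))-6t$. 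Expanding $(t+1)(t(k-2)-(k-5))-6=t^2(k-2)+3t-k-1$ and factoring this as $(t-1)(t(k-2)+k+1)$ converts the bound into $6n\leq t(t-1)(t(k-2)+k+1)$, which is exactly the condition for the $n+h(n)-1$ branch.

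The only genuinely delicate point is the qualifier $m>1$ in the gap bound, so I would check the $m=1$ boundary separately: there $f_{\neg P}(1)-\alpha(1)=\frac{8-k}{6}<1$ for $k\geq 3$, confirming that the ``for $m>1$'' hypothesis of Corollary \ref{cor:interleave2b} is essential and cannot be dropped. Beyond that, the work is routine polynomial algebra, and the main care is simply in matching the expansion and factorization to the published condition exactly.
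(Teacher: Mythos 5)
Your proposal is correct and follows exactly the paper's route: the paper proves this corollary in one line by noting that $f_{\neg P}(n)-\alpha(n)=\frac{n(3n+5-k)}{6}\geq n$ for $k\leq 5$ and invoking Corollary \ref{cor:interleave2b}, with the polynomial rewriting of the threshold left implicit. Your write-up simply fills in that algebra (the factorization $t^2(k-2)+3t-k-1=(t-1)(t(k-2)+k+1)$ is verified correctly) and is in fact slightly more careful than the paper, which omits the $m>1$ qualifier even though the gap bound fails at $m=1$.
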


For instance, the $n$-th non-pentagonal-pyramidal number is given by
\begin{equation*}  \left\{ \begin{array}{ll} 
n+\lfloor\sqrt[3]{2n}\rfloor & \text{if } \quad 2n>\lfloor\sqrt[3]{2n}\rfloor(\lfloor\sqrt[3]{2n}\rfloor-1)(\lfloor\sqrt[3]{2n}\rfloor +2)\\
n+\lfloor\sqrt[3]{2n}\rfloor-1 &  \mbox{otherwise.} 
 \end{array} 
 \right.
 \end{equation*}
For other values of $k$, Equations (\ref{eqn:pyramidal}-\ref{eqn:pyramidal2}) in Theorem \ref{thm:pyramidal} and Corollary \ref{cor:pyramidal} still hold for all $n\geq n_0(k)$ for some $n_0(k)>0$. On the other hand, checking for all $n< n_0(k)$ shows that Equation (\ref{eqn:pyramidal2}) (and thus Corollary \ref{cor:pyramidal}) holds for $3\leq k\leq 8$.

\begin{conjecture} \label{conj:pyramidal}
For $k\geq 9$, the $n$-th non-$k$-gonal-pyramidal number is given by Equation (\ref{eqn:pyramidal}).
\end{conjecture}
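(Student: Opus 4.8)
The plan is to step outside the interleaving machinery of Theorem~\ref{thm:interleave}, which is precisely what fails for $k\geq 9$: there $\alpha(1)=(k-2)/6>1$ and the upper bound $\alpha(m)\leq f_{\neg P}(m)$ breaks for small $m$, since $f_{\neg P}(m)-\alpha(m)=\tfrac{m(3m+5-k)}{6}<0$ whenever $m<(k-5)/3$. Instead I would verify \eqref{eqn:pyramidal} directly against Theorem~\ref{thm:onestep}. The difference condition that theorem needs holds for \emph{every} $k\geq 3$, since $f_{\neg P}(m+1)-f_{\neg P}(m)=\tfrac{1}{2}\big((k-2)m^2+km+2\big)\geq m$. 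Writing $c=C_{\neg P}(n)$ and introducing $\beta(m):=f_{\neg P}(m)-m$ (which is nondecreasing, as $\beta(m+1)-\beta(m)=f_{\neg P}(m+1)-f_{\neg P}(m)-1\geq 0$), Theorem~\ref{thm:onestep} becomes the clean target identity $f_P(n)=n+c+\llbracket n>\beta(c+1)\rrbracket$. I would then recast the three branches of \eqref{eqn:pyramidal} in the same language: using $6f_{\neg P}(h)-6h=h(h-1)\big((k-2)h+k+1\big)$ and $6f_{\neg P}(h+1)-6h=(k-2)h^3+3(k-1)h^2+(2k-1)h+6$, the first branch's condition becomes $n>\beta(h+1)$ (output $n+h+1$), the second becomes $n\leq\beta(h)$ (output $n+h-1$), and otherwise the output is $n+h$, where $h=h(n)=\lfloor\sqrt[3]{6n/(k-2)}\rfloor$.

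The core of the argument is to pin down the count $c$ relative to $h$. The key lemma I would establish is $c\in\{h-1,h,h+1\}$ for all $n\geq 1$ and $k\geq 9$. The lower bound $c\geq h-1$ is essentially free: combining $\alpha(h)\leq n$ with $f_{\neg P}(h-1)\leq\alpha(h)$ (equivalently $\alpha(m+1)-f_{\neg P}(m)=\tfrac{(m+1)((3k-9)m+k-2)}{6}\geq 0$, already recorded in Section~\ref{sec:pyramidal}) yields $f_{\neg P}(h-1)\leq n$. The upper bound $c\leq h+1$ is the genuinely new input: it is equivalent to $f_{\neg P}(h+2)>n$, and since $n<\alpha(h+1)$ it suffices to show $f_{\neg P}(h+2)\geq\alpha(h+1)$, which expands to $k(3h^2+8h+5)\geq 3h^2+h-8$ and holds comfortably for all $k\geq 9$, $h\geq 0$.

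With $c\in\{h-1,h,h+1\}$ in hand I would finish by a three-way case split, matching the formula's output to $n+c+\llbracket n>\beta(c+1)\rrbracket$. The cases $c=h$ and $c=h-1$ are routine, falling out from $c\geq 1$, the monotonicity of $\beta$, and (when $c=h-1$) the difference bound $f_{\neg P}(h)\leq f_{\neg P}(h+1)-h$, which forces $n\leq\beta(h+1)$ and hence rules out the first branch. The hard case, and the main obstacle, is $c=h+1$ -- exactly the sparse small-$n$ regime where the interleaving hypotheses are violated and $h$ undercounts by one. Here $f_{\neg P}(h+1)\leq n$ gives $n>\beta(h+1)$, so \eqref{eqn:pyramidal} selects its first branch and returns $n+h+1=n+c$, which is correct provided $\llbracket n>\beta(h+2)\rrbracket=0$. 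Confirming this is where the real work lies: I would show $n<\alpha(h+1)\leq\beta(h+2)$, i.e.\ sharpen the previous estimate to $f_{\neg P}(h+2)-\alpha(h+1)\geq h+2$, equivalently $k(3h^2+8h+5)\geq 3h^2+7h+4$, again true for $k\geq 9$. The conceptual point is that the one-unit undercount of $h$ in the small-$n$ regime is compensated \emph{precisely} by the first branch of \eqref{eqn:pyramidal}, so the formula survives even though Theorem~\ref{thm:interleave} does not apply. Since every inequality above in fact holds for all $k\geq 3$, this plan would simultaneously reprove Theorem~\ref{thm:pyramidal} for $3\leq k\leq 8$ within a single framework.
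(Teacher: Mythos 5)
The statement you were asked to prove is not proved in the paper at all: it is stated as Conjecture~\ref{conj:pyramidal}, supported only by the remark that Eq.~(\ref{eqn:pyramidal}) holds for $n\geq n_0(k)$ (where the interleaving inequality $\alpha(m)\leq f_{\neg P}(m)$ eventually kicks in, namely for $m\geq (k-5)/3$) together with finite checking. So there is no proof to compare against; your proposal, if written out, would settle the conjecture, and I have checked its key ingredients and believe it is correct. Your diagnosis of why Theorem~\ref{thm:interleave} fails is accurate ($f_{\neg P}(m)-\alpha(m)=\tfrac{m(3m+5-k)}{6}<0$ for small $m$ when $k\geq 9$). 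Your translation of the branch conditions is right: $6f_{\neg P}(h)-6h=h(h-1)\bigl((k-2)h+k+1\bigr)$ and $6f_{\neg P}(h+1)-6h=(k-2)h^3+3(k-1)h^2+(2k-1)h+6$ both check out, so Eq.~(\ref{eqn:pyramidal}) indeed reads ``$n+h+1$ if $n>\beta(h+1)$, $n+h-1$ if $n\leq\beta(h)$, else $n+h$,'' and these branches are mutually exclusive since $\beta$ is nondecreasing. Theorem~\ref{thm:onestep} applies for every $k\geq 3$ because $2\bigl(f_{\neg P}(m+1)-f_{\neg P}(m)\bigr)-2m=(k-2)m^2+(k-2)m+2>0$. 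The localization $C_{\neg P}(n)\in\{h-1,h,h+1\}$ is correct: the lower bound follows from $f_{\neg P}(h-1)\leq\alpha(h)\leq n$ via the identity $6\bigl(\alpha(m+1)-f_{\neg P}(m)\bigr)=(m+1)\bigl((3k-9)m+k-2\bigr)$ already in Section~\ref{sec:pyramidal}, and for the upper bound I verified $6\bigl(f_{\neg P}(h+2)-\alpha(h+1)\bigr)=(3k-3)h^2+(8k-1)h+5k+8$, which matches your stated inequality. The decisive case $c=h+1$ closes as you say: the sharpened bound $f_{\neg P}(h+2)-\alpha(h+1)\geq h+2$ reduces to $(h+1)(3h+5)k\geq(h+1)(3h+4)$, true for all $k\geq 1$, giving $n<\alpha(h+1)\leq\beta(h+2)$ and a zero Iverson bracket, so the formula's first branch output $n+h+1$ equals the correct value $n+c$. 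The other two cases are handled correctly ($n\geq f_{\neg P}(h)>\beta(h)$ kills the second branch when $c=h$, and $\beta(h+1)\geq f_{\neg P}(h)-1\geq n$ kills the first when $c=h-1$).

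Three small points to make explicit in a full write-up. First, the strict inequality $n<\alpha(h+1)$ needs the floor definition of $h$ spelled out ($h\leq\sqrt[3]{6n/(k-2)}<h+1$). Second, for $k\geq 9$ the degenerate case $h=0$ genuinely occurs (whenever $n<(k-2)/6$); there $c\geq 1$ forces $c=h+1$, the second branch condition $6n\leq 0$ is vacuously false, and your case-$C$ computation covers it with $h=0$ — worth a sentence so the reader sees $f_{\neg P}(h-1)$ is never invoked out of range. Third, since all your inequalities hold for $k\geq 3$, you do get a uniform argument that subsumes Theorem~\ref{thm:pyramidal} and, with the analogous observation, the extension in Corollary~\ref{cor:pyramidal} to $k\leq 8$; you might note that the same ``localize $C_{\neg P}$ to $\{h-1,h,h+1\}$ and compare against Theorem~\ref{thm:onestep}'' strategy is a plausible route to the companion claim in Corollary~\ref{cor:simplex2} as well.
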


\subsubsection{Non-sum-of-$k$-th-powers}\label{sec:sum_of_powers}
The sum of $k$-th powers $\sum_{i=1}^n i^k$ can be written as a polynomial of degree $k+1$ by Faulhaber's formula (also known as Bernoulli's formula)
\begin{equation*}
\label{eqn:faulhaber} S(k,n) = \sum_{i=1}^n i^k = \frac{1}{k+1}\sum_{j=0}^k\binom{k+1}{j}B^+_j n^{k+1-j}
\end{equation*}
where $B^+_j$ are the Bernoulli numbers of the second kind with $B^+_1=\frac{1}{2}$. The case $k=1$ corresponds to the triangular numbers which was discussed in Section \ref{sec:polygonal}, so we assume that $k>1$ in this section.
Pascal's identity \cite{MacMillan2011} $(n+1)^{k+1}-1 = \sum_{j=0}^k\binom{k+1}{j}\sum_{i=1}^n i^j$
implies that $S(k,n)+n<\frac{(n+1)^{k+1}}{k+1}$.
Since $x^k$ is a convex function, $\int_{0}^n x^kdx = \frac{n^{k+1}}{k+1}$ is upper bounded by the trapezoidal sum, i.e.,
$\sum_{i=1}^n i^k - \sum_{i=1}^n \frac{i^k-(i-1)^k}{2} = \sum_{i=1}^n i^k-\frac{n^k}{2} \geq \frac{n^{k+1}}{k+1}$
and thus $S(k,n) \geq \frac{n^{k+1}}{k+1}+n$ for $n,k>1$.
Thus, we can pick $\alpha(n) = \frac{n^{k+1}}{k+1}$ and $h(n) = \lfloor\sqrt[k+1]{(k+1)n}\rfloor$ and use Corollary \ref{cor:interleave2b} to show the following result.
\begin{theorem}\label{thm:faulhaber_sum}
The $n$-th non-sum-of-$k$-th-powers (for $k>1$) is
\begin{equation*} a(n) = \left\{ \begin{array}{ll} 
n+\lfloor\sqrt[k+1]{(k+1)n}\rfloor & \text{if } \quad (k+1)(n+\lfloor\sqrt[k+1]{(k+1)n}\rfloor)\\&\quad\quad > \sum_{j=0}^k\binom{k+1}{j}B^+_j \lfloor\sqrt[k+1]{(k+1)n}\rfloor^{k+1-j}\\
 n+\lfloor\sqrt[k+1]{(k+1)n}\rfloor-1 &  \mbox{otherwise,} 
 \end{array} 
 \right.
 \label{eqn:faulhaber_sum}
 \end{equation*}
i.e.,
$$\begin{array}{ll}a(n) = & n+\lfloor\sqrt[k+1]{(k+1)n}\rfloor\\
&-\left\llbracket (k+1)(n+\lfloor\sqrt[k+1]{(k+1)n}\rfloor)\leq \sum_{j=0}^k\binom{k+1}{j}B^+_j \lfloor\sqrt[k+1]{(k+1)n}\rfloor^{k+1-j}\right\rrbracket.
\end{array}$$
or
$$a(n) = n+\lfloor\sqrt[k+1]{(k+1)n}\rfloor-\left\llbracket n\leq \sum_{i=1}^{\lfloor\sqrt[k+1]{(k+1)n}\rfloor} (i^k-1)\right\rrbracket.$$
\end{theorem}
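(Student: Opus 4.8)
The plan is to read off this statement as a direct instance of Corollary \ref{cor:interleave2b}, taking $f_{\neg P}(n) = S(k,n) = \sum_{i=1}^n i^k$ as the sequence to be complemented, the smooth interleaving function $\alpha(n) = \frac{n^{k+1}}{k+1}$, and its associated inverse $h(n) = \max\{m\in\mathbb{N}^+ : \alpha(m) \leq n\} = \lfloor \sqrt[k+1]{(k+1)n}\rfloor$. The whole proof then amounts to checking that the hypotheses of that corollary hold and mechanically rewriting its conclusion. First I would confirm that $\alpha$ is admissible: it is increasing, and $h(n) \geq 1$ for every $n \geq 1$, so $h$ is well defined.

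Next I would verify the three substantive hypotheses in order. The growth condition $S(k,m+1) - S(k,m) = (m+1)^k \geq m$ is immediate for $k > 1$. The interleaving condition $S(k,m-1) \leq \alpha(m) \leq S(k,m)$ and the sharper gap condition $S(k,m) - \alpha(m) \geq m$ required by Corollary \ref{cor:interleave2b} are exactly what the two estimates recorded before the theorem provide: the lower estimate $S(k,n) \geq \frac{n^{k+1}}{k+1} + n$ gives both $\alpha(m) \leq S(k,m)$ and $S(k,m) - \alpha(m) \geq m$, while the Pascal-identity estimate $S(k,n) + n < \frac{(n+1)^{k+1}}{k+1}$ evaluated at $n = m-1$ yields $S(k,m-1) < \frac{m^{k+1}}{k+1} = \alpha(m)$. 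I regard these two sandwiching inequalities as the analytic core, and I would establish them as indicated in the preamble: the lower one by comparing the integral $\int_0^n x^k\,dx = \frac{n^{k+1}}{k+1}$ with the trapezoidal sum of the convex integrand $x^k$ (which supplies the stronger slack $S(k,n) \geq \frac{n^{k+1}}{k+1} + \frac{n^k}{2}$, and $\frac{n^k}{2} \geq n$ for $k,n > 1$ closes the gap to $+n$), and the upper one from Pascal's identity by isolating the $j=k$ term and bounding the remaining nonnegative terms from below.

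With the hypotheses in place, Corollary \ref{cor:interleave2b} gives $a(n) = f_P(n) = n + h(n)$ when $n + h(n) > S(k,h(n))$ and $n + h(n) - 1$ otherwise. The remaining step is purely algebraic: substitute $h(n) = \lfloor\sqrt[k+1]{(k+1)n}\rfloor$, expand $S(k,h(n))$ by Faulhaber's formula (\ref{eqn:faulhaber}), and multiply the threshold inequality through by $k+1$ to clear the denominator, which turns $n + h(n) > S(k,h(n))$ into exactly the strict condition of (\ref{eqn:faulhaber_sum}). I expect the only real obstacle to be the two sandwiching inequalities of the previous paragraph — in particular pinning down the additive slack precisely enough that the gap bound $S(k,m) - \alpha(m) \geq m$ survives for all $m > 1$ — together with the routine but error-prone bookkeeping of keeping the strict-versus-weak inequalities aligned with the correct branch of the corollary.
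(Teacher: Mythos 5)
Your proposal is correct and coincides essentially line for line with the paper's own argument: the paper uses exactly the same data ($\alpha(m)=\frac{m^{k+1}}{k+1}$, $h(n)=\lfloor\sqrt[k+1]{(k+1)n}\rfloor$), derives the upper interleaving bound $S(k,m-1)<\alpha(m)$ from Pascal's identity and the gap bound $S(k,n)-\alpha(n)\geq\frac{n^k}{2}\geq n$ (for $n,k>1$) from the trapezoidal estimate for the convex function $x^k$, and then applies Corollary \ref{cor:interleave2b}, clearing the denominator via Faulhaber's formula just as you describe. Your branch alignment (strict inequality giving $n+h(n)$, otherwise $n+h(n)-1$) matches the paper's statement, so there is nothing to add.
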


For the case of $k=2$, i.e., the non-square-pyramidal numbers (OEIS \href{https://oeis.org/A302058}{A302058}), 
this results in 
\begin{equation*}  a(n) = \left\{ \begin{array}{ll} 
n+ \lfloor\sqrt[3]{3n}\rfloor& \text{if } \quad 6n> \lfloor\sqrt[3]{3n}\rfloor \left( \lfloor\sqrt[3]{3n}\rfloor-1\right)\left(2 \lfloor\sqrt[3]{3n}\rfloor+5\right)\\
 n+ \lfloor\sqrt[3]{3n}\rfloor-1 &  \mbox{otherwise.} 
 \end{array} 
 \right.
 \end{equation*}

Similarly, the same argument shows the following result.
\begin{theorem}\label{thm:faulhaber_sum_2}
The $n$-th number that is not of the form $m+\sum_{i=1}^m i^k$ (for $k>1$) is given by
\begin{equation*} a(n) = \left\{ \begin{array}{ll} n+\lfloor\sqrt[k+1]{(k+1)n}\rfloor & \text{if } \quad (k+1)n> \\ &\quad\quad \sum_{j=0}^k\binom{k+1}{j}B^+_j \lfloor\sqrt[k+1]{(k+1)n}\rfloor^{k+1-j}\\
 n+\lfloor\sqrt[k+1]{(k+1)n}\rfloor-1 & \mbox{otherwise,}  
 \end{array} 
 \right.
 \label{eqn:faulhaber_sum_2}
 \end{equation*}
i.e.,
$$a(n) = n+\lfloor\sqrt[k+1]{(k+1)n}\rfloor-\left\llbracket (k+1)n\leq \sum_{j=0}^k\binom{k+1}{j}B^+_j \lfloor\sqrt[k+1]{(k+1)n}\rfloor^{k+1-j}\right\rrbracket$$
or
$$a(n) = n+\lfloor\sqrt[k+1]{(k+1)n}\rfloor-\left\llbracket n\leq \sum_{i=1}^{\lfloor\sqrt[k+1]{(k+1)n}\rfloor} i^k\right\rrbracket.$$
\end{theorem}

For instance, the $n$-th number that is not of the form $m+\sum_{i=1}^m i^2$ is given by 
\begin{equation*}  a(n) = \left\{ \begin{array}{ll} 
n+ \lfloor\sqrt[3]{3n}\rfloor& \text{if } \quad 6n> \lfloor\sqrt[3]{3n}\rfloor \left( \lfloor\sqrt[3]{3n}\rfloor+1\right)\left(2 \lfloor\sqrt[3]{3n}\rfloor+1\right)\\
n+ \lfloor\sqrt[3]{3n}\rfloor-1 &  \mbox{otherwise.} 
 \end{array} 
 \right.
 \end{equation*}

\subsubsection{Non-$k$-simplex Numbers}\label{sec:simplex}

\begin{lemma}
\label{lem:factorial}
Let $n^{\overline{k}} = \prod_{i=0}^{k-1}(n+i)$ be the rising factorial. Then 
\begin{equation}\label{eqn:rf}
\left(n+\left\lfloor\frac{k}{2}\right\rfloor-1\right)^k \leq n^{\overline{k}} \leq \left(n+\left\lfloor\frac{k}{2}\right\rfloor\right)^k
\end{equation}
for $n\geq r$, where $r$ is defined as 
$r = \left\lfloor\frac{k^2-4k+6}{4}\right\rfloor$ if $k$ is even and $r = \left\lfloor\frac{k^2-6k+11}{4}\right\rfloor$ if $k$ is odd.
\end{lemma}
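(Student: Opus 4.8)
The plan is to treat the two inequalities in \eqref{eqn:rf} separately. The right-hand (upper) bound is elementary and holds with no restriction on $n$, so all of the difficulty — and the entire role of the threshold $r$ — lives in the left-hand (lower) bound. I will prove the upper bound by AM--GM and the lower bound by a symmetric pairing of the $k$ factors, reducing the odd case to the even case.

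For the upper bound I would apply the AM--GM inequality to the $k$ factors $n,n+1,\dots,n+k-1$ of $n^{\overline k}$. Their arithmetic mean is exactly $n+\frac{k-1}{2}$, so $(n^{\overline k})^{1/k}\le n+\frac{k-1}{2}\le n+\left\lfloor\frac{k}{2}\right\rfloor$, which gives $n^{\overline k}\le\left(n+\left\lfloor\frac{k}{2}\right\rfloor\right)^k$ for every $n\ge 1$. Since $\left\lfloor\frac{k}{2}\right\rfloor\ge\frac{k-1}{2}$ for both parities, this half of the statement needs no lower bound on $n$; the hypothesis $n\ge r$ is used only for the other inequality.

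For the lower bound I would first handle even $k$, writing $t=\left\lfloor\frac{k}{2}\right\rfloor=k/2$ and pairing the factor $n+i$ with its mirror $n+(k-1-i)$ for $i=0,\dots,t-1$. Each of the $t$ pairs has the same sum $S=2n+k-1$, and since $x\mapsto x(S-x)$ is concave with vertex at $S/2=n+\frac{k-1}{2}$, the products increase as $i$ moves inward; hence the \emph{smallest} pair is the outermost one, $n\,(n+k-1)$. It therefore suffices to check $n(n+k-1)\ge(n+t-1)^2$, which expands to the single linear condition $n\ge(t-1)^2$. Once this holds, every pair is at least $(n+t-1)^2$, and multiplying the $t$ pairs gives $n^{\overline k}\ge(n+t-1)^k=\left(n+\left\lfloor\frac{k}{2}\right\rfloor-1\right)^k$; a short floor computation identifies $(t-1)^2$ with $\left\lfloor\frac{k^2-4k+6}{4}\right\rfloor$, the stated $r$ for even $k$. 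For odd $k$ I would not repeat this analysis but reduce to it: peeling off the largest factor $n+k-1$, which satisfies $n+k-1\ge n+\left\lfloor\frac{k}{2}\right\rfloor-1$, leaves the even-length rising factorial $\prod_{i=0}^{k-2}(n+i)$ of even length $k-1$, whose half is $\frac{k-1}{2}$ and whose base $n+\frac{k-1}{2}-1=n+\left\lfloor\frac{k}{2}\right\rfloor-1$ coincides with the target base. Applying the even bound supplies $k-1$ powers of this base and the peeled factor supplies the last one, so the threshold inherited from the length-$(k-1)$ even bound is $\left(\frac{k-1}{2}-1\right)^2=\left\lfloor\frac{k^2-6k+11}{4}\right\rfloor$, exactly the stated $r$ for odd $k$.

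The main obstacle is entirely in the lower bound, and within it two points need care. First, one must verify that the outermost pair is genuinely the binding (smallest) one, so that a single quadratic inequality in $n$ simultaneously controls all pairs; this is where the concavity of $x\mapsto x(S-x)$ and the fact that all the relevant factors lie to one side of the vertex are used. Second, the bookkeeping of the floor functions and the parity of $k$ must be done carefully so that the clean elementary threshold $\left(\left\lfloor\frac{k}{2}\right\rfloor-1\right)^2$ (and its length-$(k-1)$ analogue) matches the two closed forms given for $r$. The odd-to-even reduction is the delicate structural step: it must be arranged so that the single peeled factor accounts for exactly the one extra power of the base, which is precisely the place where the difference between $\left\lfloor\frac{k}{2}\right\rfloor$ for even and for odd $k$ enters.
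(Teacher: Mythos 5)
Your proof is correct, and it matches the paper on the even case while taking a genuinely different route on the upper bound and, more substantially, on the odd case. The paper also uses the symmetric mirror pairing: with $a=\frac{k-1}{2}$ it writes $n^{\overline{k}}$ for even $k$ as $\prod_{i=0}^{k/2-1}\bigl((n+a)^2-(i+\tfrac{1}{2})^2\bigr)$, bounds each pair above by $(n+a)^2\leq\left(n+\left\lfloor\frac{k}{2}\right\rfloor\right)^2$ (where you instead apply a single AM--GM over all $k$ factors, which has the virtue of making explicit that the upper bound needs no hypothesis on $n$), and bounds each pair below by $(n+a-\tfrac{1}{2})^2$ with the binding case at the outermost pair --- the same observation you justify via concavity of $x\mapsto x(S-x)$; your resulting even threshold $\left(\frac{k}{2}-1\right)^2=\frac{(k-2)^2}{4}$ agrees with the paper's. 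The real divergence is the odd case: the paper keeps the central factor $n+a$ (now $a$ is an integer) and redoes the pair analysis with pairs $(n+a)^2-i^2\geq(n+a-1)^2$, whereas you peel off the largest factor $n+k-1$ and invoke the even-length bound on the remaining $k-1$ factors. Your reduction buys a cleaner argument --- no second pair analysis, and the inherited threshold $\left(\frac{k-3}{2}\right)^2$ lands exactly on the stated $\left\lfloor\frac{k^2-6k+11}{4}\right\rfloor$; the paper's direct pairing buys sharpness in principle, since for odd $k$ the pairwise inequality only requires $n\geq\frac{(k-3)^2}{8}$, though the lemma asserts the weaker threshold, so both arguments prove exactly what is claimed. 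Your floor bookkeeping is also right: the numerators $k^2-4k+6$ and $k^2-6k+11$ exceed the perfect squares $(k-2)^2$ and $(k-3)^2$ by $2$, and those squares are divisible by $4$ for the relevant parity of $k$, so the stated floors collapse to $\frac{(k-2)^2}{4}$ and $\frac{(k-3)^2}{4}$ as you use them.
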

\begin{proof}
Let $a = \frac{k-1}{2}$. If $k$ is even, 
$$\begin{array}{lcl}
n^{\overline{k}} &=& \prod_{i=0}^{\frac{k}{2}-1}(n+a-i-\frac{1}{2})(n+a+i+\frac{1}{2})\\
&\leq&  \prod_{i=0}^{\frac{k}{2}-1}((n+a)^2-(i+\frac{1}{2})^2)\\
&\leq&  \prod_{i=0}^{\frac{k}{2}-1}(n+a)^2\\
&\leq& (n+\lfloor\frac{k}{2}\rfloor)^{2\lfloor a\rfloor +2}\\
&\leq& \left(n+\lfloor\frac{k}{2}\rfloor\right)^k
\end{array}$$
Furthermore, since $(n+a)^2-(i+\frac{1}{2})^2 - (n+a-\frac{1}{2})^2 = n+i-\frac{i^2}{4}-1\geq n+k-\frac{k^2}{4}-1\geq n-r\geq 0$ for $i\leq\frac{k}{2}-1$, this implies that  $\left(n+\lfloor\frac{k}{2}\rfloor-1\right)^k \leq n^{\overline{k}}$.

If $k$ is odd,  $n^{\overline{k}} = (n+a)\prod_{i=1}^{a}(n+a-i)(n+a+i)$ and a similar argument as above shows that $n^{\overline{k}} \leq \left(n+\left\lfloor\frac{k}{2}\right\rfloor\right)^k$. Furthermore,
$(n+a)^2-i^2 - (n+a-1)^2 = 2n+\frac{3i}{2}-\frac{i^2}{4}-\frac{9}{4}\geq 2n+\frac{3k}{2}-\frac{k^2}{4}-\frac{9}{4}\geq 2(n-r)\geq 0$ for $i\leq a$ implies that  $\left(n+\lfloor\frac{k}{2}\rfloor-1\right)^k \leq n^{\overline{k}}$.
\end{proof}

Since $r\leq 1$ for $k\leq 5$, this implies that Equation (\ref{eqn:rf}) is true for all $n\geq 1$ and $k\leq 5$. By checking whether Equation (\ref{eqn:rf}) is satisfied for all $n<r$,
we find that Equation (\ref{eqn:rf}) is also true for $n\geq 1$ and $k = 7$ or $k=9$.

The {\em $k$-simplex} (or {\em $k$-polytopic}) numbers are defined as $S(k,n) = \begin{binom}{n+(k-1)}{k}\end{binom}= \frac{n^{\overline{k}}}{k!}$. We are interested in the cases $k>1$. 
By Lemma \ref{lem:factorial} we can choose $\alpha(m) = \frac{(m+\lfloor\frac{k}{2}\rfloor-1)^k}{k!}$ and $h(n) = \lfloor \sqrt[k]{k!n}\rfloor-\lfloor\frac{k}{2}\rfloor+1$.
Note that $S(k,n+1)-S(k,n) = \begin{binom}{n+(k-1)}{k-1}\end{binom} \geq n$.
Theorem \ref{thm:interleave} implies the following Theorem.
\begin{theorem}\label{thm:simplex1}
For $1<k\leq 5$ or $k=7$ or $k=9$, the $n$-th non-$k$-simplex number is

\begin{equation}
a(n) = 
\left\{ \begin{array}{ll} 
n+\lfloor \sqrt[k]{k!n}\rfloor - \lfloor\frac{k}{2}\rfloor+2 & \mbox{if } \quad n+\lfloor \sqrt[k]{k!n}\rfloor- \lfloor\frac{k}{2}\rfloor+1 \geq\\
&\quad\quad\begin{dbinom}{\lfloor \sqrt[k]{k!n}\rfloor+\lceil \frac{k}{2}\rceil+1}{k}\end{dbinom}\\
n+\lfloor \sqrt[k]{k!n}\rfloor - \lfloor\frac{k}{2}\rfloor & \mbox{if }\quad n+\lfloor \sqrt[k]{k!n}\rfloor- \lfloor\frac{k}{2}\rfloor < \begin{dbinom}{\lfloor \sqrt[k]{k!n}\rfloor+\lceil \frac{k}{2}\rceil}{k}\end{dbinom}\\
n+\lfloor \sqrt[k]{k!n}\rfloor - \lfloor\frac{k}{2}\rfloor+1 & \mbox{otherwise.} 
 \end{array} 
 \right.
 \label{eqn:simplex}
 \end{equation}
\end{theorem}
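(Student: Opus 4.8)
The plan is to realize the non-$k$-simplex sequence as an instance of the interleaving framework of Theorem~\ref{thm:interleave}, with $f_{\neg P}(m) = S(k,m) = \binom{m+k-1}{k}$, $\alpha(m) = \frac{(m+\lfloor k/2\rfloor-1)^k}{k!}$, and $h(n) = \lfloor\sqrt[k]{k!n}\rfloor - \lfloor k/2\rfloor + 1$, and then to read off the three cases of Eq.~(\ref{eqn:simplex}) directly from Eq.~(\ref{eqn:interleave}). First I would record the growth hypothesis: $S(k,m+1)-S(k,m) = \binom{m+k-1}{k-1}\geq m$, as noted just before the theorem, so the hypothesis $f_{\neg P}(m+1)-f_{\neg P}(m)\geq m$ of Theorem~\ref{thm:interleave} (via Theorem~\ref{thm:onestep}) is satisfied.

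Next I would verify the two interleaving inequalities $S(k,m-1)\leq\alpha(m)\leq S(k,m)$ for every $m\geq 1$. Multiplying through by $k!$, these read $(m-1)^{\overline{k}} \leq (m+\lfloor k/2\rfloor-1)^k \leq m^{\overline{k}}$. The right-hand inequality is exactly the lower bound of Eq.~(\ref{eqn:rf}) in Lemma~\ref{lem:factorial} at $n=m$, and the left-hand inequality is the upper bound of Eq.~(\ref{eqn:rf}) at $n=m-1$, after rewriting $(m+\lfloor k/2\rfloor-1)^k = ((m-1)+\lfloor k/2\rfloor)^k$; the boundary case $m=1$ is trivial since $(m-1)^{\overline{k}}=0$. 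This is precisely the step that constrains $k$: Lemma~\ref{lem:factorial} holds for every $n\geq 1$ only when $1<k\leq 5$, $k=7$, or $k=9$, which is the hypothesis of the theorem. I would then confirm the closed form of $h$ by inverting $\alpha$: $\alpha(m)\leq n \iff (m+\lfloor k/2\rfloor-1)^k\leq k!\,n \iff m\leq \sqrt[k]{k!n}-\lfloor k/2\rfloor+1$, so $h(n)=\lfloor\sqrt[k]{k!n}\rfloor-\lfloor k/2\rfloor+1$; in particular $h(n)\geq 1$ for $n\geq 1$, equivalently $\lfloor k/2\rfloor^k\leq k!$, which is again the $n=1$ case of the lemma (this replaces the normalization $\alpha(1)=1$, the role of which in the proof of Theorem~\ref{thm:interleave} is only to guarantee $h(n)\geq 1$).

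Finally I would substitute into Eq.~(\ref{eqn:interleave}). Writing $q=\lfloor\sqrt[k]{k!n}\rfloor$ and using $k-\lfloor k/2\rfloor=\lceil k/2\rceil$, I compute $f_{\neg P}(h(n)) = \binom{h(n)+k-1}{k} = \binom{q+\lceil k/2\rceil}{k}$ and $f_{\neg P}(h(n)+1) = \binom{h(n)+k}{k} = \binom{q+\lceil k/2\rceil+1}{k}$, while $n+h(n)=n+q-\lfloor k/2\rfloor+1$. The first case $n+h(n)\geq f_{\neg P}(h(n)+1)$ then yields the condition and value of the first line of Eq.~(\ref{eqn:simplex}); the second case $n+h(n)\leq f_{\neg P}(h(n))$ yields the second line, where the non-strict inequality $n+q-\lfloor k/2\rfloor+1\leq\binom{q+\lceil k/2\rceil}{k}$ is rewritten as the strict inequality $n+q-\lfloor k/2\rfloor<\binom{q+\lceil k/2\rceil}{k}$ by integrality; and the remaining case gives the third line. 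The only genuinely nontrivial step is the uniform-in-$m$ verification of the interleaving inequalities from Lemma~\ref{lem:factorial}, including the index shift to $n=m-1$ and the $m=1$ boundary; everything after that is substitution and integer bookkeeping.
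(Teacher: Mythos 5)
Your proposal is correct and takes essentially the same approach as the paper, which likewise obtains Eq.~(\ref{eqn:simplex}) by applying Theorem~\ref{thm:interleave} with $f_{\neg P}(m)=\binom{m+k-1}{k}$, $\alpha(m)=\frac{(m+\lfloor k/2\rfloor-1)^k}{k!}$ and $h(n)=\lfloor\sqrt[k]{k!n}\rfloor-\lfloor k/2\rfloor+1$, with the interleaving inequalities supplied by Lemma~\ref{lem:factorial} (whose restricted validity gives exactly the hypothesis $1<k\leq 5$, $k=7$, $k=9$). You additionally make explicit some bookkeeping the paper leaves implicit --- the index shift to $n=m-1$, the $m=1$ boundary case, and the fact that $\alpha(1)\leq 1$ suffices in place of the stated normalization $\alpha(1)=1$ since its only role is to guarantee $h(n)\geq 1$.
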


Consider the case $k=3$, i.e., the non-tetrahedral numbers (OEIS \href{https://oeis.org/A145397}{A145397}).
$S(3,n)-\alpha(n) = \frac{n(3n+2)}{6} \geq n$ for $n>1$. Thus, we can apply Corollary \ref{cor:interleave2b} and Equation (\ref{eqn:simplex})
 reduces to 
 
\begin{equation*}
a(n) = 
\left\{ \begin{array}{ll}
n+\lfloor \sqrt[3]{6n}\rfloor & \mbox{if } \quad n+\lfloor \sqrt[3]{6n}\rfloor> \begin{binom}{\lfloor \sqrt[3]{6n}\rfloor+2}{3}\end{binom}\\
n+\lfloor \sqrt[3]{6n}\rfloor-1 & \mbox{otherwise.}
\end{array}
\right.
\end{equation*}
This formula requires only one evaluation of the integer cube root function. This is simpler and more amenable to algorithmic implementation than the formula in \cite{mortici2010} which was obtained by solving a cubic polynomial and requires two evaluations of the (real-valued) cube root function and one evaluation of the square root function.

Similar to Section \ref{sec:pyramidal}, for values of $k$ other than those indicated in Theorem \ref{thm:simplex1}, Equation (\ref{eqn:rf}) still holds, albeit only for $n\geq n_0(k)$ for some value $n_0(k)>0$.
On the other hand, by looking at all $n\leq n_0(k)$, we find that Equation (\ref{eqn:simplex}) holds for all $n\geq 1$ and $1<k\leq 15$.

\begin{corollary}\label{cor:simplex2}
For $1<k\leq 15$, the $n$-th non-$k$-simplex number is given by Equation (\ref{eqn:simplex}).
\end{corollary}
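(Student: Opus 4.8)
The plan is to split the range $1<k\le 15$ into the values already settled by Theorem~\ref{thm:simplex1} and the eight remaining values $k\in\{6,8,10,11,12,13,14,15\}$, and to treat each remaining $k$ by separating a tail of large $n$, where the interleaving machinery applies essentially verbatim, from a finite head of small $n$ that is checked directly.

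First I would note that Theorem~\ref{thm:simplex1} already establishes Eq.~(\ref{eqn:simplex}) for $k\in\{2,3,4,5,7,9\}$, because for those $k$ Lemma~\ref{lem:factorial} supplies the two-sided bound Eq.~(\ref{eqn:rf}) for all $n\ge 1$. For each remaining $k$ I keep the same data $\alpha(m)=\frac{(m+\lfloor k/2\rfloor-1)^k}{k!}$ and $h(n)=\lfloor\sqrt[k]{k!n}\rfloor-\lfloor k/2\rfloor+1$ used there. The difference condition $f_{\neg P}(m+1)-f_{\neg P}(m)=\binom{m+k-1}{k-1}\ge m$ holds for every $k>1$, and the remaining structural requirements on $\alpha$ and $h$ are exactly as in the setup preceding Theorem~\ref{thm:simplex1}; so the only hypothesis of Theorem~\ref{thm:interleave} in doubt is the interleaving $f_{\neg P}(m-1)\le\alpha(m)\le f_{\neg P}(m)$, which is precisely Eq.~(\ref{eqn:rf}) read at $n=m-1$ (lower inequality) and $n=m$ (upper inequality).

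Next I would use that Lemma~\ref{lem:factorial} guarantees Eq.~(\ref{eqn:rf}) for $n\ge r(k)$, hence the interleaving inequalities hold at every index $m$ with $m-1\ge r(k)$. Since the proof of Theorem~\ref{thm:interleave} invokes the interleaving only at the indices $m=h(n)$ and $m=h(n)+1$ (through $f_{\neg P}(h(n))$, $f_{\neg P}(h(n)+1)$, and the identity $C_{\neg P}(n)\in\{h(n)-1,h(n)\}$), its conclusion Eq.~(\ref{eqn:simplex}) remains valid for all $n$ large enough that $h(n)-1\ge r(k)$; as $h$ is increasing and unbounded this excludes only finitely many $n$, say $n<n_0(k)$. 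For those finitely many pairs $(k,n)$ I would evaluate both the genuine $n$-th non-$k$-simplex number --- by listing the simplex numbers $\binom{m+k-1}{k}$ over the relevant range and counting the gaps --- and the right-hand side of Eq.~(\ref{eqn:simplex}), confirming equality by direct (computer-assisted) computation. Taking the union over the eight values of $k$ then yields the claim for all $1<k\le 15$.

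The hard part is the middle step, namely making this localized use of Theorem~\ref{thm:interleave} rigorous. The theorem is stated under the interleaving hypothesis for \emph{all} $m$, whereas here Eq.~(\ref{eqn:rf}), and therefore the interleaving, genuinely fails for some small $m$ when $k\notin\{2,3,4,5,7,9\}$. I must therefore confirm that the inequality chain in that proof depends only on the values of $f_{\neg P}$ and $\alpha$ at indices $\ge r(k)$, and pin down an explicit threshold $n_0(k)$ beyond which Eq.~(\ref{eqn:simplex}) is forced. Once $n_0(k)$ is explicit, the residual difficulty is only bookkeeping: the finite verification must exhaust \emph{every} $n<n_0(k)$ for each of the eight values of $k$, rather than a sample, which is routine but must be performed in full.
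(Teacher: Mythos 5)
Your proposal is correct and follows essentially the same route as the paper: Theorem~\ref{thm:simplex1} settles $k\in\{2,3,4,5,7,9\}$, Lemma~\ref{lem:factorial} gives Eq.~(\ref{eqn:rf}) for the remaining $k$ only beyond a threshold $n_0(k)$, and the finitely many smaller $n$ are verified by direct computation. In fact you are slightly more careful than the paper, which leaves implicit the observation that the proof of Theorem~\ref{thm:interleave} uses the interleaving only at the indices $h(n)$ and $h(n)+1$, so its conclusion localizes to large $n$.
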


\subsubsection{Complement to Sequences Generated by Polynomials}
Consider an integer sequence defined by a $k$-th degree polynomial as follows: $a(n) = \left\lfloor\sum_{i=0}^k a_in^i\right\rfloor$ with $k>1$, real coefficients $a_i$, and $a_k>0$ such that for all $n\in \mathbb{N}^+$, we have $a(n+1)>a(n)\geq 0$. 
For instance, the sequences defined by $f_{\neg P}(n)$ in Sections \ref{sec:polygonal}-\ref{sec:simplex} are all of this form.
\begin{lemma}
For each real number $b$ such that $\frac{a_{k-1}}{a_k}-1<b<\frac{a_{k-1}}{a_k}$,
$$ a_k(n+b)^k \leq a(n) \leq a_k(n+1+b)^k$$
for all sufficiently large $n$.
If in addition $k\geq 3$, then
$$ a_k(n+b)^k +n \leq a(n) \leq a_k(n+1+b)^k$$
for all sufficiently large $n$.
\end{lemma}
\begin{proof}
Let $\tilde{a}(n) =  \sum_{i=0}^k a_in^i$, then $|a(n)-\tilde{a}(n)|\leq 1$.
Next note that $a_k(n+b)^k -\tilde{a}(n) = (a_kb-a_{k-1})n^{k-1}+r_1(n)$ where $r_1(n)$ is a polynomial of degree $k-2$ or less. Since $a_kb-a_{k-1}<0$, it follows that $a_k(n+b)^k -\tilde{a}(n)<-1$ and thus $a_k(n+b)^k -a(n)<0$ for sufficiently large $n$. Similarly,
$a_k(n+b+1)^k-\tilde{a}(n) =  (a_k(b+1)-a_{k-1})n^{k-1} + r_2(n)$ for a polynomial $r_2(n)$ of degree $k-2$ or less.
Since $a_k(b+1)-a_{k-1} > 0$, this means that $a_k(n+b+1)^k-\tilde{a}(n)>1$ and thus $a_k(n+b+1)^k-a(n)>0$ for large enough $n$.
Finally, if $k\geq 3$, the facts that $a_k(n+b)^k +n-\tilde{a}(n) = (a_kb-a_{k-1})n^{k-1}+n+r_1(n)$, $(a_kb-a_{k-1}) < 0$, and $n+r_1(n)$ is a polynomial of degree $k-2$ or less imply that $a_k(n+b)^k +n-\tilde{a}(n)<-1$ and thus $a_k(n+b)^k +n-a(n)<0$ for large enough $n$.
\end{proof}

Note that $a(n+1)-a(n)\geq n$ for large enough $n$ since $a_k>0$.
By choosing $\alpha(m) =  a_k(m+b)^k$ with corresponding $h(n) = \left\lfloor\sqrt[k]{\frac{n}{a_k}}-b\right\rfloor$, for large enough $n$ the $n$-th term of the complementary sequence to $a(n)$ can be found using one evaluation of the function $h$. In particular,
Theorem \ref{thm:interleave} implies the following result.

\begin{theorem}
Let $\{c(n)\}$ be the complementary sequence to the sequence $\{a(n)\}$. If $\frac{a_{k-1}}{a_k}-1<b<\frac{a_{k-1}}{a_k}$ and $h(n) = \left\lfloor\sqrt[k]{\frac{n}{a_k}}-b\right\rfloor$, then there exists $n_0>0$ such that for all $n\geq n_0$,
\begin{equation*} 
c(n) = \left\{ \begin{array}{ll} n+h(n)+1 & \text{if } \quad n+h(n)\geq a(h(n)+1)\\
n+h(n)-1 & \text{if } \quad n+h(n)\leq a(h(n))\\
n+h(n) & \text{otherwise.} 
\end{array} 
\right.
\label{eqn:polynomial_seq}
\end{equation*}
\label{thm:polynomial_seq}
\end{theorem} 

This can be implemented  for the following special case using the integer $k$-th root function $n\rightarrow\lfloor\sqrt[k]{n}\rfloor$ discussed in Section \ref{sec:intro} by choosing $b=\left\lfloor\frac{a_{k-1}}{a_k}\right\rfloor$.
\begin{corollary}
Let $\{c(n)\}$ be the complementary sequence to the sequence $\{a(n)\}$. If $\frac{a_{k-1}}{a_k}$ is not an integer, then there exists $n_0>0$ such that for all $n\geq n_0$,
\begin{equation*} 
c(n) = \left\{ \begin{array}{ll} n+\left\lfloor\sqrt[k]{\frac{n}{a_k}}\right\rfloor-\left\lfloor\frac{a_{k-1}}{a_k}\right\rfloor+1 & \text{if }  \quad n+\left\lfloor\sqrt[k]{\frac{n}{a_k}}\right\rfloor-\left\lfloor\frac{a_{k-1}}{a_k}\right\rfloor\geq \\[5pt] &\quad\quad a\left(\left\lfloor\sqrt[k]{\frac{n}{a_k}}\right\rfloor-\left\lfloor\frac{a_{k-1}}{a_k}\right\rfloor+1\right)\\[5pt]
n+\left\lfloor\sqrt[k]{\frac{n}{a_k}}\right\rfloor-\left\lfloor\frac{a_{k-1}}{a_k}\right\rfloor-1 & \text{if } \quad n+\left\lfloor\sqrt[k]{\frac{n}{a_k}}\right\rfloor-\left\lfloor\frac{a_{k-1}}{a_k}\right\rfloor\leq\\[5pt]&\quad\quad a\left(\left\lfloor\sqrt[k]{\frac{n}{a_k}}\right\rfloor-\left\lfloor\frac{a_{k-1}}{a_k}\right\rfloor\right)\\
n+\left\lfloor\sqrt[k]{\frac{n}{a_k}}\right\rfloor-\left\lfloor\frac{a_{k-1}}{a_k}\right\rfloor &\text{otherwise.} 
\end{array} 
\right.
\end{equation*}
\label{cor:polynomial_seq_1}
\end{corollary}

Similarly, Corollary \ref{cor:interleave2b} implies that:

\begin{corollary}
Let $k\geq 3$ and let $\{c(n)\}$ be the complementary sequence to the sequence $\{a(n)\}$.  If $\frac{a_{k-1}}{a_k}-1<b<\frac{a_{k-1}}{a_k}$ and $h(n) = \left\lfloor\sqrt[k]{\frac{n}{a_k}}-b\right\rfloor$, then there exists $n_0>0$ such that for all $n\geq n_0$,
\begin{equation*} c(n) = \left\{ \begin{array}{ll} n+h(n) & \mbox{if } \quad n+h(n)> a(h(n))\\
 n+h(n)-1 & \mbox{otherwise.}
\end{array} 
\right.
\label{eqn:polynomial_2b}
\end{equation*}
\label{cor:polynomial_2b}
\end{corollary}

\begin{corollary}
Let $k\geq 3$ and let $\{c(n)\}$ be the complementary sequence to the sequence $\{a(n)\}$.  If $\frac{a_{k-1}}{a_k}$ is not an integer, then there exists $n_0>0$ such that for all $n\geq n_0$,
\begin{equation*} c(n) = \left\{ \begin{array}{ll} n+\left\lfloor\sqrt[k]{\frac{n}{a_k}}\right\rfloor-\left\lfloor\frac{a_{k-1}}{a_k}\right\rfloor & \mbox{if }  \quad n+\left\lfloor\sqrt[k]{\frac{n}{a_k}}\right\rfloor-\left\lfloor\frac{a_{k-1}}{a_k}\right\rfloor>\\[5pt]&\quad\quad a\left(\left\lfloor\sqrt[k]{\frac{n}{a_k}}\right\rfloor-\left\lfloor\frac{a_{k-1}}{a_k}\right\rfloor\right)\\
 n+\left\lfloor\sqrt[k]{\frac{n}{a_k}}\right\rfloor-\left\lfloor\frac{a_{k-1}}{a_k}\right\rfloor-1 & \mbox{otherwise.}
\end{array} 
\right.
\label{eqn:polynomial_2c_1}
\end{equation*}
\label{cor:polynomial_2c_1}
\end{corollary}

\subsubsection{Computing Characteristic Functions}
If $h$ is easily computable, then this can lead to an efficient algorithm to compute the characteristic function $\chi_{_{\neg P}}$ of $f_{\neg P}$. It is clear that if $f_{\neg P}(m) = n$, then $h(n) = m$.
This implies that $\chi_{_{\neg P}}(n) = 1$ if and only if $f_{\neg P}(h(n)) = n$, i.e., $$\chi_{_{\neg P}}(n) = \llbracket f_{\neg P}(h(n)) = n\rrbracket.$$
As an example, consider the characteristic function $\chi(n)$ of $4$-simplex numbers, i.e., numbers of the form $\binom{m}{4}$ for some $m$ (OEIS \href{https://oeis.org/A256436}{A256436}).
Using $h(n) = \lfloor\sqrt[4]{24n}\rfloor-1$, we see that $\chi(n) = 1$ if and only if $n=\binom{\lfloor\sqrt[4]{24n}\rfloor+2}{4}$.

\subsection{Bisection Search}
Theorem \ref{thm:twostep} shows that if $f_{\neg P}(n)$ grows quadratically (or faster), then the number of steps needed to find $f_P$ using the function iteration method is no more than two. 
Numerical experiments suggest a similar relationship for other grow rates. In particular, these experiments allow us to conjecture the following result.
\begin{conjecture}
For $k\geq 1$, if $f_{\neg P}(n)$ grows faster than $n^{1+\frac{1}{k}}$, then the number of steps needed for the function iteration method to determine $f_{P}(n)$ is no more than $k+1$.
\end{conjecture}
  
Thus, for the complementary sequences discussed in the above sections, the number of steps is bounded by a constant independent of $n$.
For other types of sequences, this may not be the case.
In these cases, a different method for finding fixed points is needed that is more efficient than the function iteration method.

The function iteration method can take a large number of steps to converge when the set of integers that satisfy $P$ is sparse. In this case, it might be more optimal to use a bisection search method to find the fixed point of $g_n$. To this end, we first find an interval $[k_{\min},k_{\max}]$ that bounds $f_P(n)$.
Since we know that $f_{P}(n)\geq n$, we initially set $k_{\min}=n$. If a better lower bound for $f_P(n)$ is known, this can be assigned to the initial $k_{\min}$.  Initially we can also set $k_{\max} = n$ unless we know a better lower or upper bound or an approximation of $b$ for $f_P(n)$ in which case we set $k_{\max} = b$. 

We next double this initial value $k_{\max}$ repeatedly until $g_n(k_{\max})\leq k_{\max}$. Then a bisection search is applied until
the smallest fixed point is obtained. The pseudo code for this algorithm is shown in Algorithm \ref{alg:bisectionCP}.
The number of steps needed to converge is on the order of $\log_2(f_P(n))$ and is in general more efficient than the function iteration method in Section \ref{sec:iteration}, especially when the numbers satisfying $P$ are sparse. 
To illustrate this, we show in Figure \ref{fig:sixprimes} the number of steps needed for these two methods to obtain $f_P(n)$ when $P(m)$ denotes the logical statement ``$m$ is a product of exactly $6$ distinct primes'' (OEIS \href{https://oeis.org/A067885}{A067885}). We see that 
the number of steps for the bisection method is much less than for the function iteration method.

\begin{algorithm}[htbp]
\begin{algorithmic}
\Require $r\in\mathbb{N}, g_n(x)$
\Comment{computes the minimal fixed point of $g_n$.}
\State $k_{\min} \gets n$, $k_{\max}\gets n$.
\While {$g_n(k_{\max}) > k_{\max}$}
\State $k_{\max} \gets 2k_{\max}$ 
\EndWhile
\State $k_{\min} \gets \max(k_{\min},k_{\max}/2)$
\While {$k_{\max}-k_{\min} > 1$}
\State $k_{\mbox{\footnotesize mid}} = \lfloor (k_{\max}+k_{\min})/2\rfloor$
\If {$g(k_{\mbox{\footnotesize mid}})\leq k_{\mbox{\footnotesize mid}}$}
\State $k_{\max} \gets k_{\mbox{\footnotesize mid}}$
\Else
\State $k_{\min} \gets k_{\mbox{\footnotesize mid}}$
\EndIf
\EndWhile
\State \Return $k_{\max}$
\end{algorithmic}
\caption{Bisection search on $g_n(x)$ to compute $f_P(n)$.}
\label{alg:bisectionCP}
\end{algorithm}

\begin{figure}[htbp]
\begin{center}
\includegraphics[width=6in]{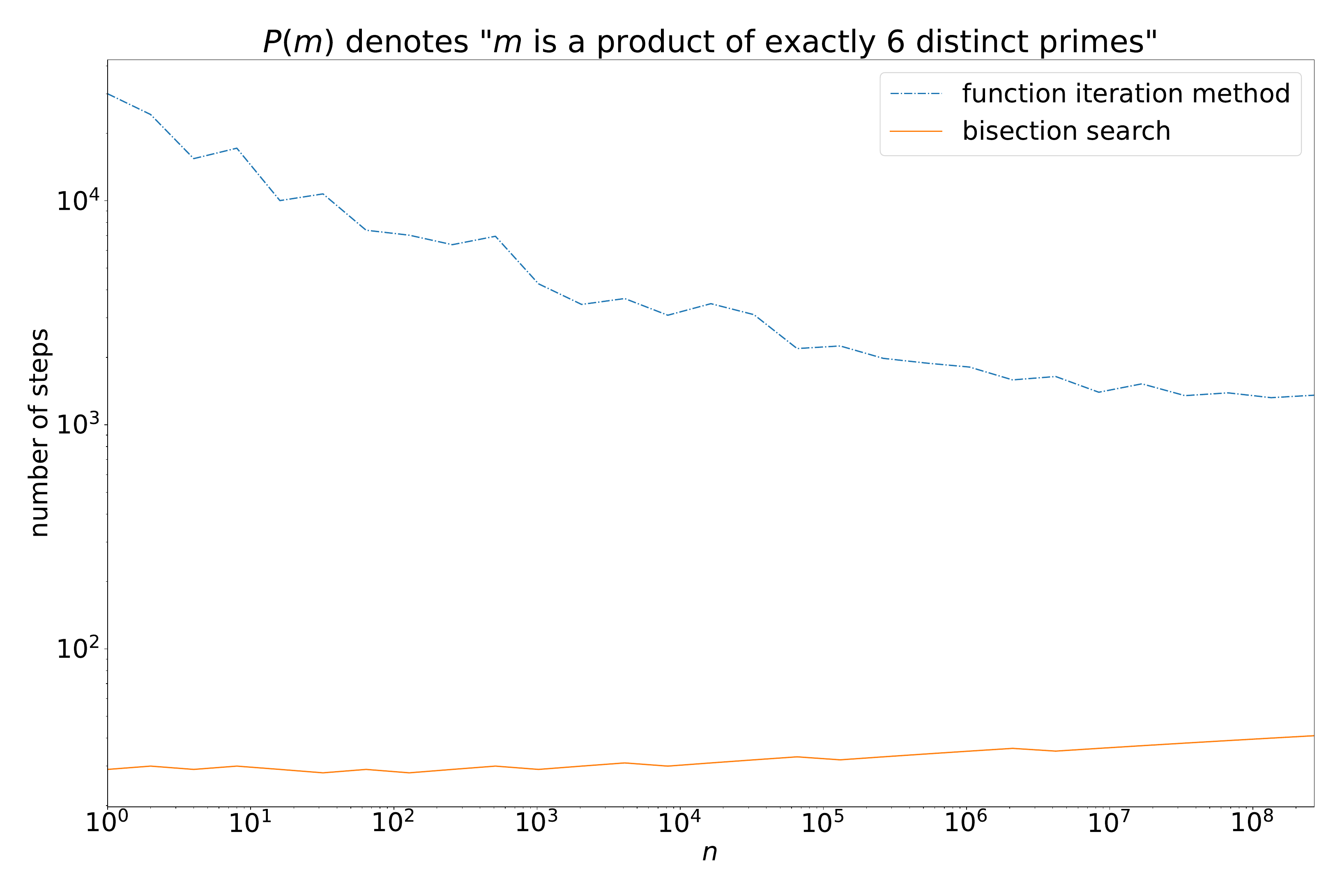}
\end{center}
\caption{Number of steps to find $f_P(n)$ when  $P(m)$ denotes the statement ``$m$ is a product of exactly $6$ distinct primes''.}
\label{fig:sixprimes}
\end{figure}

\subsection{Hybrid Method}
Since in several well known cases the number of steps that the function iteration method terminates in is small, we can take advantage of this by setting the initial $k_{\min}$ and $k_{\max}$ to $g_n^{(r)}(n)$ for some small $r$, say $r=2$. Here $f^{(r)}$ denote the $r$-th iterate of the function $f$. The total number of steps is then $r$ plus the number of steps of the bisection method. This is illustrated in Algorithm \ref{alg:hybridCP}.

\begin{algorithm}[htbp]
\begin{algorithmic}
\Require $r\in\mathbb{N}, g_n(x)$
\Comment{computes the minimal fixed point of $g_n$.}
\State $k_{\min} \gets g_n^{(r)}(n)$, $k_{\max}\gets g_n^{(r)}(n)$.
\While {$g_n(k_{\max}) > k_{\max}$}
\State $k_{\max} \gets 2k_{\max}$ 
\EndWhile
\State $k_{\min} \gets \max(k_{\min},k_{\max}/2)$
\While {$k_{\max}-k_{\min} > 1$}
\State $k_{\mbox{\footnotesize mid}} = \lfloor (k_{\max}+k_{\min})/2\rfloor$
\If {$g(k_{\mbox{\footnotesize mid}})\leq k_{\mbox{\footnotesize mid}}$}
\State $k_{\max} \gets k_{\mbox{\footnotesize mid}}$
\Else
\State $k_{\min} \gets k_{\mbox{\footnotesize mid}}$
\EndIf
\EndWhile
\State \Return $k_{\max}$
\end{algorithmic}
\caption{Hybrid method. First $r$ iterations of the function iteration method is used to initialize the bisection search.}
\label{alg:hybridCP}
\end{algorithm}

\section{The $n$-th Term of the Union of Two Sequences}
Consider the following scenario where $\{a(i)\}$ and $\{b(i)\}$ are disjoint sequences of integers with corresponding counting functions $C_{a}$ and $C_b$. For instance, the sequence $a$ could be the set of square-free numbers and $b$ the perfect powers. The goal is to find the $n$-th element in the sorted list when $a$ and $b$ are sorted together. In this specific example of $a$ and $b$, the joint sequence (denoted as $c$) is OEIS \href{https://oeis.org/A304449}{A304449}. Other examples of such joint sequences are for instance OEIS \href{https://oeis.org/A000430}{A000430}, \href{https://oeis.org/A006899}{A006899}, \href{https://oeis.org/A089237}{A089237}, \href{https://oeis.org/A126684}{A126684}, \href{https://oeis.org/A168363}{A168363} and \href{https://oeis.org/A174090}{A174090}.
Because the sequences $a$ and $b$ are disjoint, in this scenario, the counting function $C_P(n)$ for the joint sequence $c$ is simply the sum of the counting functions of $a$ and $b$ given by $C_a(n)+C_b(n)$ and the above algorithms can be used to find the $n$-th element of $c$.

When the two sequences are not disjoint, $C_P(n) = C_a(n)+C_b(n)-C_{a\cap b}(n)$ by the inclusion-exclusion principle and in some cases the intersection of the sequences can easily be determined.
For instance, let $p$ be prime and consider the sequence of numbers $k$ such that $k^k$ is a $p$-th power (OEIS \href{https://oeis.org/A176693}{A176693},  \href{https://oeis.org/A376279}{A376379}). If the prime factorization of $k$ is $k=\prod_i p_i^{e_i}$, then $k^k=\prod_i p_i^{ke_i}$. Thus, $k^k$ is a $p$-power if and only if $ke_i \equiv 0  \pmod{p}$. Since $p$ is prime, the residue classes form a field, and this condition corresponds to when $k$ is a multiple of $p$ or $e_i$ is a multiple of $p$ for all $i$, i.e., $k$ is a $p$-th power. Thus, this sequence is the union of the multiples of $p$ and the $p$-th powers with corresponding counting functions
$\lfloor n/p\rfloor$ and $\lfloor\sqrt[p]{n}\rfloor$. The counting function of their intersection is
$\lfloor\sqrt[p]{n}/p\rfloor$ and thus the counting function of the union is $\lfloor n/p\rfloor+\lfloor\sqrt[p]{n}\rfloor - \lfloor\sqrt[p]{n}/p\rfloor$.

Similarly the counting function of the union of squares and powers of $2$ (OEIS \href{https://oeis.org/A188915}{A188915}) is given by $\lfloor \sqrt{n}\rfloor+ \lfloor \log_2(n)\rfloor-\lfloor \log_2(n)/2\rfloor=\lfloor \sqrt{n}\rfloor +\lceil \log_2(n)/2\rceil$. 
Similarly, the counting function of the sequence resulting from combining $a$ and $b$ and removing their intersection is
 $C_P(n) = C_a(n)+C_b(n)-2C_{a\cap b}(n)$. For an example see OEIS \href{https://oeis.org/A377025}{A377025}.

In computer science, there is sometimes the need to find the $n$-th smallest element of an (large) unordered list of length $l$ without having to sort the entire list.
Typically, this is performed using a partial sort (see e.g., the Quickselect algorithm \cite{Hoare1961}) which has an $O(l)$ average performance.
The above scenario can be considered a special case of this problem where the sequence can be decomposed into $k$ disjoint subsequences,  the counting functions of the subsequences are efficiently computed, and the length of the list of elements is a priori unknown.
Using the above algorithm leads to a running time depending on $n$ unlike the partial sort algorithm which has a running time depending on the length $l$ of the entire list.

\section{Sequences of Repeated Terms}
Since $f_P(n)$, which is the complementary sequence of the sequence $f_{\neg P}(n)$, can be viewed as the sequence of integers skipping the values of $f_{\neg P}(n)$,
we can consider the function $f_{P}(n)-n$ which list consecutive integers, each one of which is repeated. For example, consider the sequence of non-square numbers (OEIS \href{https://oeis.org/A000037}{A000037}): $f_{P}(n) = (2,3,5,6,7,8,10,11,\ldots)$.
The sequence $f_{P}(n)-n$ is $(1,1,2,2,2,2,3,3,\ldots)$, i.e., each integer $m$ appears $2m$ times in the sequence (OEIS \href{https://oeis.org/A000194}{A000194}).
Since $f_{P}(n) = n+\lfloor\sqrt{n}+\frac{1}{2}\rfloor$, this implies that the sequence  $(1,1,2,2,2,2,3,3,\ldots)$ can be written as $\lfloor\sqrt{n}+\frac{1}{2}\rfloor$.
Similarly, the sequence $f_{P}(n)$ of the non-triangular numbers corresponds to a sequence $f_P(n)-n$ where each integer $m$ appears $m$ times and thus can be written explicitly as $\lfloor\sqrt{2n}+\frac{1}{2}\rfloor$ (OEIS \href{https://oeis.org/A002024}{A002024}). This sequence has been studied in \cite{knuth:taocp:1968}.

More generally, given a sequence of real numbers $a_1, a_2,\ldots $, consider a sequence $b(n)$ for $n\geq 1$ where each number $a_m\geq 1$ appears $\beta(m)$ times consecutively:
$$ \underbrace{a_1,a_1,\ldots a_1}_{\beta(1) \text{ times}}, \underbrace{a_2,a_2,\ldots a_2}_{\beta(2) \text{ times}},\ldots$$
The goal is to determine $b(n)$ given $n$. The case of $\beta(m) = md$ for a fixed $d$ was studied in \cite{Nyblom2002}. 

We will consider the special case where $a_i = i$ as the approach to the general case is the same.
Then $f_{P}(n)=b(n)+n-1$ skips an integer at every additional $\beta(i)$ numbers, meaning the $i$-th value skipped is $\beta(i)+1$ plus the last number skipped. In other words
$f_{\neg P}(n) = \sum_{i=1}^n(\beta(i)+1) = n+\sum_{i=1}^n\beta(i)$.
We can then apply the results and algorithms in the previous sections to find $f_{P}(n)$ and thus also $b(n)$. 

Since $f_{\neg P}(n)-f_{\neg P}(n-1) = \beta(n)+1$, if in addition $\beta(m)\geq m-2$, then we can apply Theorems \ref{thm:twostep} and \ref{thm:onestep} to find $b(n)$.
To illustrate this approach, consider the sequence $b(n)$ where each integer $m$ appears $m^2$ times (OEIS \href{https://oeis.org/A074279}{A074279}). 
Then $f_{\neg P}(n) = n+\sum_{i=1}^n i^2 = n+n(n+1)(2n+1)/6$ (OEIS \href{https://oeis.org/A145066}{A145066}). Theorem \ref{thm:faulhaber_sum_2} in Section \ref{sec:sum_of_powers} shows that 
\begin{equation*}
f_{P}(n) = 
\left\{
\begin{array}{ll}
 n+ \lfloor\sqrt[3]{3n}\rfloor & \mbox{if }\quad 6n> \lfloor\sqrt[3]{3n}\rfloor \left( \lfloor\sqrt[3]{3n}\rfloor+1\right)\left(2 \lfloor\sqrt[3]{3n}\rfloor+1\right)\\
 n+ \lfloor\sqrt[3]{3n}\rfloor-1 & \mbox{otherwise.}
 \end{array}
 \right.
 \end{equation*}
This implies that 
\begin{equation*}
b(n) = f_P(n)-n+1 =
\left\{
\begin{array}{ll}
 \lfloor\sqrt[3]{3n}\rfloor +1 & \mbox{if }\quad 6n> \lfloor\sqrt[3]{3n}\rfloor \left( \lfloor\sqrt[3]{3n}\rfloor+1\right)\left(2 \lfloor\sqrt[3]{3n}\rfloor+1\right) \\
\lfloor\sqrt[3]{3n}\rfloor & \mbox{otherwise.}
  \end{array}
 \right.
 \end{equation*}

This formula is simpler than the formula for this sequence given in \cite{putievskiy2023integersequencesirregulararrays}.
More generally, a sequence $b(n)$ where each integer $m$ repeats $m^{k-1}$ times, can be computed using the formula

\begin{equation*}
b(n) = 
\left\{
\begin{array}{ll}
 \lfloor\sqrt[k]{kn}\rfloor +1 & \mbox{if }\quad kn>\sum_{j=0}^{k-1}\binom{k}{j}B^+_j \lfloor\sqrt[k]{kn}\rfloor^{k-j}\\
 \lfloor\sqrt[k]{kn}\rfloor & \mbox{otherwise.}
  \end{array}
 \right.
 \end{equation*} 
Again, these formulas require only one evaluation of $\lfloor\sqrt[k]{kn}\rfloor$. 

This approach is used to find novel formulas for sequences of repeated integers such as OEIS \href{https://oeis.org/A056556}{A056556},    \href{https://oeis.org/A056557}{A056557}, \href{https://oeis.org/A056558}{A056558}, \href{https://oeis.org/A108581}{A108581}, \href{https://oeis.org/A108582}{A108582}, \href{https://oeis.org/A127321}{A127321}, \href{https://oeis.org/A180447}{A180447}, \href{https://oeis.org/A194847}{A194847}, \href{https://oeis.org/A194848}{A194848}, \href{https://oeis.org/A235463}{A235463},  and \href{https://oeis.org/A360010}{A360010}.
For instance, the sequence $b(n)$ where each integer $m$ is repeated $\begin{binom}{m+3}{3}\end{binom}$ times (OEIS \href{https://oeis.org/A127321}{A127321}) can be expressed as 

\begin{equation*}
b(n) = 
\left\{
\begin{array}{ll}
\lfloor\sqrt[4]{24(n+2)}\rfloor-2 & \mbox{if }\quad n<\begin{binom}{\lfloor\sqrt[4]{24(n+2)}\rfloor+2}{4}\end{binom}\\
\lfloor\sqrt[4]{24(n+2)}\rfloor-1 & \mbox{otherwise.}
  \end{array}
 \right.
 \end{equation*} 

As mentioned above, the existence of a simple formula for $f_P$ does not imply a simple formula for $f_{\neg P}$. However, the existence of a simple formula for $C_P$ implies the existence of a simple formula $C_{\neg P}$ which leads to efficient algorithms for both $f_P$ and $f_{\neg P}$. In Section \ref{sec:cp}, we look at other examples of logical statements $P$, mainly related to the prime factorizations of numbers, for which there are relatively efficient algorithms for computing $C_P(n)$ (and thus also $C_{\neg P}(n)$).
	
\section{Some Other Explicitly Computable Counting Functions $C_P(n)$} \label{sec:cp}
The algorithm for computing the complementary sequence $f_{\neg P}(n)$ requires the computation of the counting function $C_P(n)$. Note that $C_P(n)$
is the largest integer $m$ such that $f_P(m)\leq n$. If $f_P(n)$ is efficiently computed, the fact that $f_P$ is strictly increasing means that $C_P$ can be computed via a bisection search (similar to  Algorithm \ref{alg:bisectionCP} for $g_n$). An example of such functions $f_P$ is the case where $P(m)$ denotes the statement ``$q(x)=m$ for some positive integer $x$'', with $q$ a strictly increasing polynomial with integer coefficients.

In other cases, the function $f_P(n)$ is more easily enumerated sequentially in which case a naive approach would be to enumerate $f_P(n)$ in order to compute $C_P(n)$ and then solve for the fixed point of $g_n$. However, this approach to find the complementary sequence $f_{\neg P}(n)$ is less efficient than simply enumerating $f_P(n)$ and assigning the gaps between successive terms to $f_{\neg P}(n)$.

On the other hand, for several number theoretical statements $P$, computing  $C_P(n)$ can be more efficient than enumerating $f_P(n)$.
For instance, there exists algorithms for computing the prime counting function $\pi(x)$
that are more efficient than enumerating all $\pi(x)$ prime numbers less than or equal to $x$ \cite{Deleglise1996}.  

The formulas for the counting function of square-free numbers\footnote{See \cite{pawlewicz2011countingsquarefreenumbers} for a more efficient formula.} given by 
$$\sum_{i=1}^{\lfloor\sqrt{n}\rfloor} \mu(i)\lfloor n/i^2\rfloor$$ and the counting function of perfect powers\footnote{See \cite{nyblom2006} for an alternative formula.} given by 
$$1-\sum_{i=2}^{\lfloor \log_2(x)\rfloor}\mu(i)(\lfloor \sqrt[i]{x}\rfloor -1)$$ require the computation of  the M\"{o}bius function $\mu(n)$ which can be done efficiently using a sieve \cite{Deleglise1996a}.  The counting functions of semiprimes and $k$-almost primes can be expressed using $\pi(x)$ and are found in  \cite{almostprime,crisan:semiprimes:2021}. The counting function of square-free $k$-almost primes (i.e., numbers that are products of $k$ distinct primes) has a similar form in terms of $\pi(x)$ (see for instance OEIS \href{https://oeis.org/A067885}{A067885}).

Another example where $C_P(n)$ is easily obtained is for instance when $P(m)$ denotes the statement  ``$m$ is a repdigit in base $b$'' (OEIS \href{https://oeis.org/A139819}{A139819}). In this case,
$C_P(n) = (b-1)\lfloor\log_b(n)\rfloor+\left\lfloor\frac{(b-1)n}{b^{\lfloor\log_b(n)\rfloor+1}-1}\right\rfloor$.

\section{Conclusions}
We study algorithms to find the $n$-th integer that satisfies a certain condition $P$ via a fixed point approach. We show that the function iteration method to solve this problem is equivalent to the Lambek-Moser method. Furthermore, we show that the two-step iteration of the Lambek-Moser algorithm requiring $2$ evaluations of the counting function can be reduced to a single evaluation of the counting function. We also present a bisection algorithm that is more efficient when the numbers satisfying $P$ are sparse. 

For a particular condition $P$, this approach is useful not only in finding the $n$-th term of the  complementary sequences $f_{\neg P}$ but also the $n$-th term of the original sequence $f_P$ itself when computing $C_P(n)$ is more efficient than enumerating $f_P$ up to $n$. For instance, we have used
this approach in various Python programs to find the $n$-th term of
OEIS sequences without enumerating all $n$ terms. 
These sequences include perfect powers (OEIS \href{https://oeis.org/A001597}{A001597}), prime powers (OEIS \href{https://oeis.org/A000961}{A000961}, \href{https://oeis.org/A246655}{A246655}, \href{https://oeis.org/A246547}{A246547}, \href{https://oeis.org/A025475}{A025475}), powerful numbers (OEIS \href{https://oeis.org/A001694}{A001694}), $k$-full numbers (OEIS \href{https://oeis.org/A036966}{A036966}, \href{https://oeis.org/A036967}{A036967}, \href{https://oeis.org/A069492}{A069492}, \href{https://oeis.org/A069493}{A069493}), square-free numbers (OEIS \href{https://oeis.org/A005117}{A005117}), semiprimes (OEIS \href{https://oeis.org/A001358}{A001358}), square-free semiprimes (OEIS \href{https://oeis.org/A006881}{A006881}), $k$-almost primes (OEIS \href{https://oeis.org/A014612}{A014612}, \href{https://oeis.org/A014613}{A014613}, \href{https://oeis.org/A014614}{A014614}), square-free $k$-almost primes (OEIS \href{https://oeis.org/A007304}{A007304}, \href{https://oeis.org/A046386}{A046386}, \href{https://oeis.org/A046387}{A046387}, \href{https://oeis.org/A067885}{A067885}, \href{https://oeis.org/A281222}{A281222}), Achilles numbers (OEIS \href{https://oeis.org/A052486}{A052486}), 
orders of proper semifields and twisted fields (OEIS \href{https://oeis.org/A088247}{A088247}, \href{https://oeis.org/A088248}{A088248}), $p$-smooth numbers (OEIS \href{https://oeis.org/A003586}{A003586}, \href{https://oeis.org/A051037}{A051037}, \href{https://oeis.org/A002473}{A002473}, \href{https://oeis.org/A051038}{A051038}, \href{https://oeis.org/A080197}{A080197}),
numbers with at least one digit $b-1$ in base $b$ (OEIS \href{https://oeis.org/A074940}{A074940},  \href{https://oeis.org/A337239}{A337239}, \href{https://oeis.org/A337250}{A337250}), primes starting with digit $b$ (OEIS \href{https://oeis.org/A045707}{A045707}-\href{https://oeis.org/A045715}{A045715}),
sums of $3$ squares (OEIS \href{https://oeis.org/A000378}{A000378}),
numbers with exactly $k$ divisors (OEIS \href{https://oeis.org/A030513}{A030513}, \href{https://oeis.org/A030515}{A030515}, \href{https://oeis.org/A030626}{A030626}, \href{https://oeis.org/A030627}{A030627}, \href{https://oeis.org/A030632}{A030632},  \href{https://oeis.org/A030633}{A030633}, \href{https://oeis.org/A137484}{A137484}, \href{https://oeis.org/A137485}{A137485}, \href{https://oeis.org/A137488}{A137488}), selected sifting sequences \cite{Snellman2025} (OEIS \href{https://oeis.org/A003159}{A003159}, \href{https://oeis.org/A007417}{A007417}, \href{https://oeis.org/A382744}{A382744}, \href{https://oeis.org/A382745}{A382745}, \href{https://oeis.org/A382746}{A382746}),
characteristic functions (OEIS \href{https://oeis.org/A256436}{A256436}, \href{https://oeis.org/A387646}{A387646}),
and complementary sequences (OEIS \href{https://oeis.org/A004215}{A004215}, \href{https://oeis.org/A024619}{A024619}, \href{https://oeis.org/A052485}{A052485}, \href{https://oeis.org/A029742}{A029742}, \href{https://oeis.org/A007916}{A007916}, \href{https://oeis.org/A002808}{A002808}, \href{https://oeis.org/A013929}{A013929},  \href{https://oeis.org/A100959}{A100959}, \href{https://oeis.org/A139819}{A139819}, \href{https://oeis.org/A374812}{A374812}, \href{https://oeis.org/A090946}{A090946}, \href{https://oeis.org/A057854}{A057854}, \href{https://oeis.org/A185543}{A185543}, \href{https://oeis.org/A138836}{A138836}, \href{https://oeis.org/A138890}{A138890}, \href{https://oeis.org/A325112}{A325112}, \href{https://oeis.org/A059485}{A059485}, \href{https://oeis.org/A279622}{A279622}, \href{https://oeis.org/A145397}{A145397}, \href{https://oeis.org/A302058}{A302058}, \href{https://oeis.org/A376573}{A376573}, \href{https://oeis.org/A183300}{A183300}, \href{https://oeis.org/A387644}{A387644}).

Finally, interested readers can obtain Python programs for the computation of the OEIS sequences discussed in this paper by accessing the entries of the corresponding OEIS sequences or the corresponding programs in the GitHub repository \verb+oeis-sequences+ \cite{Wu:github:oeissequences:2021}.

\end{document}